\title{Voter model stability with respect to conservative noises}
\author{
  Gideon Amir
  \thanks{Bar Ilan University, gideon.amir@biu.ac.il }
  \and
  Omer Angel
  \thanks{University of British Columbia, angel@math.ubc.ca }
  \and
  Rangel Baldasso
  \thanks{Pontifícia Universidade Católica do Rio de Janeiro, rangel@puc-rio.br }
  \and
  Daniel de la Riva
  \thanks{University of British Columbia, delariva@math.ubc.ca }
}
\date{\vspace{-5ex}}
\numberwithin{equation}{section}
\newtheorem{theorem}{Theorem}
\numberwithin{theorem}{section}
\newtheorem{definition}[theorem]{Definition}
\newtheorem{remark}[theorem]{Remark}
\newtheorem{lemma}[theorem]{Lemma}
\newtheorem{corollary}[theorem]{Corollary}
\newtheorem{conjecture}[theorem]{Conjecture}
\def\P{{\mathbb{P}}}
\def\Cov{\textup{Cov}}
\def\eps{\varepsilon}
\def\E{{\mathbb{E}}}
\def\F{\mathcal{F}}
\def\R{{\mathbb{R}}}
\def\Z{{\mathbb{Z}}}
\def\N{{\mathbb{N}}}
\newcounter{constant}
\newcommand{\nc}[1]{\refstepcounter{constant}\label{#1}}
\newcommand{\uc}[1]{c_{\textnormal{\tiny \ref{#1}}}}
\DeclareMathOperator{\ber}{Bernoulli}
\begin{document}
\maketitle
\begin{abstract}
    The notions of noise sensitivity and stability were recently extended for the voter model. In this model, the vertices of a graph have opinions that are updated by uniformly selecting edges. We further extend stability results to different classes of perturbations. We consider two different types of noise: in the first one, an exclusion process is performed on the edge selections, while in the second, independent Brownian motions are applied to such a sequence. In both cases, we prove stability of the consensus opinion provided the noise is run for a short amount of time, depending on the underlying graph structure. This is done by analyzing the expected size of the pivotal set, whose definition differs from the usual one in order to reflect the change associated with these noises.
\end{abstract}

\section{Introduction}

The notion of noise stability refers to the phenomenon in which a sequence of Boolean functions is resistant to small random perturbations of its input. This was originally introduced in the seminal paper by Benjamini, Kalai, and Schramm \cite{BKS99} together with its counterpart, called noise sensitivity. Initial analysis focused mainly on crossing functions for critical Bernoulli percolation and considered the case where noise acts independently in each coordinate of the input, effectively changing a positive fraction of its entries. Since then, this was generalized in many different directions, including Bernoulli percolation under exclusion noise \cite{exclusion, montonicity_exclusion, gv}, Boolean percolation \cite{ahlberg2014noise}, Voronoi percolation \cite{ahlbergb, QuenchedVoronoi, ahlberg2021rate} Bargmann-Fock percolation \cite{bargmann_fock}, and more recently Ising model under Glauber dynamics \cite{tassion2025noisesensitivitycrossingshigh}.

These notions were also studied in the context of social choice, for instance, in the case of majority voting \cite{odonnell_stablest}. More recently, the consensus opinion of the voter model was considered in \cite{Amir_2022}. This differs from previous works by considering a sequence of functions obtained by analyzing the evolution of a Markov process and introducing a notion of ``dynamical noise,'' where perturbations are applied in the evolution of the process.

Sensitivity questions regarding observables related to interacting particle systems are a relatively new area of investigation. Particularly challenging here is the fact that usually, these processes are constructed by means of auxiliary random variables that play very different roles and, thus, are not directly comparable. Sometimes, even the notion of noise to be considered and how it should affect each distinct source of randomness is not entirely clear. We begin investigating the case where noise is introduced in the graphical construction by conservative modifications of the ``clocks.'' We examine the consensus opinion of the voter model on finite graphs under the effect of noises that can be viewed as random permutations of the edge selections that determine the outcome of the function, as described below.

In order to precisely state our main result, let us first recall the voter model. Towards the noise sensitivity issue, we consider a sequence of graphs from the outset.
Fix a growing sequence of finite connected graphs $G_{n} = (V_{n}, E_{n})$ and a parameter $p \in (0,1)$. From here on, whenever we consider a sequence of graphs $(G_n)_{n \in \N}$, we will always assume $G_n$ has $n$ vertices. Let $\eta_{0} \in \{0,1\}^{V_{n}}$ denote the initial opinion of the voter model, whose entries are independently sampled with distribution $\ber(p)$. Let also $\omega$ denote an independent sequence of uniformly chosen directed edges in $E_{n}$. 
The voter model is now defined in terms of the initial configuration $\eta_{0}$ and the sequence $\omega$ as follows:
Given the configuration at time $k$, $\eta_{k}$, and $\omega(k+1) = (x,y)$, we let voter $y$ copy the opinion of voter $x$, i.e.
\[
\eta_{k+1}(z) = \begin{cases} \eta_k(z), & z\neq y \\
\eta_k(x), & z=y. \end{cases}
\]

It is well known that for any connected graph $G_{n}$, consensus is reached almost surely, i.e., all opinions must eventually agree. We denote this (random) consensus opinion by $f_{n}(\eta_{0}, \omega)$.  Since our results will not depend on the value of $p \in (0,1)$, we omit it from the notation. A formal definition can be found in Equation \eqref{eq:voter_step}.

We now introduce the noise we are interested in. 
We will change not the initial opinions, but the sequence $\omega$, reflecting a perturbation in the dynamics of the voter model.
Given $t>0$, let $\omega_{t}$ denote the sequence of edge selections obtained by performing an interchange process on the initial sequence $\omega$ up to time $t$. This amounts to performing a Markov process in the sequence $\omega$ that switches the order of any two neighboring entries at rate one (see Equation \eqref{eq:interchange_generator} for a precise definition). 

We now state the first main result of this paper, which will be proven in Section \ref{stabsec}:
\begin{theorem}\label{stabtheorem}
For any connected graph $G_{n}$ with $n$ vertices, maximum degree $\Delta_{n}$, and $t>0$, 
\begin{equation}\label{eq:stab_theorem}
\P \Big( f_{n}(\eta_{0},\omega) \neq f_{n}(\eta_{0},\omega_{t}) \Big) \leq 3 t \cdot \frac{ n \Delta_{n}}{
|E_{n}|} .
\end{equation}
\end{theorem}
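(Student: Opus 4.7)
The plan is to reduce \eqref{eq:stab_theorem} to a first-moment bound on a random pivotal set, then to estimate this first moment via the dual coalescing random walk representation of the voter model. Construct the interchange by assigning independent rate-$1$ Poisson clocks to each gap $k\geq 1$; when the clock at gap $k$ rings at time $s$, entries $\omega_s(k)$ and $\omega_s(k+1)$ are swapped. Let $\omega^{(k)}$ denote $\omega$ with its $k$-th and $(k+1)$-th entries exchanged, and define
\[
P(\omega)=\bigl\{k \geq 1 : f_n(\eta_0,\omega)\neq f_n(\eta_0,\omega^{(k)})\bigr\}.
\]
The event $\bigl\{f_n(\eta_0,\omega)\neq f_n(\eta_0,\omega_t)\bigr\}$ requires at least one pivotal swap in $[0,t]$, and since the iid uniform law on directed edge sequences is stationary under the interchange one has $\omega_s \stackrel{d}{=}\omega$ for every $s\in[0,t]$. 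A direct Palm-type calculation then yields
\[
\P\bigl(f_n(\eta_0,\omega)\neq f_n(\eta_0,\omega_t)\bigr)\leq \int_0^t \E\bigl[|P(\omega_s)|\bigr]\,ds=t\,\E\bigl[|P(\omega)|\bigr],
\]
so the theorem reduces to proving the first-moment estimate $\E[|P(\omega)|]\leq 3n\Delta_n/|E_n|$.

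The next input is a local analysis of membership in $P(\omega)$. If $\omega(k)$ and $\omega(k+1)$ share no vertex, the two voter updates commute, so $\eta_{k+1}$ and every subsequent state are unchanged by the swap, and $k\notin P(\omega)$. When the two edges do share a vertex, a case analysis according to which endpoints coincide --- common source, common target, head-to-tail, or reversed edges --- pins down, in each case, a single pair of vertices whose opinions in $\eta_{k-1}$ must disagree for the swap to alter $\eta_{k+1}$. To capture when such a local alteration propagates all the way to the consensus, I would use duality: write $f_n(\eta_0,\omega)=\eta_0(X^{*})$, where $X^{*}$ is the common endpoint at forward time $0$ of the backward coalescing random walks $\{W^v\}_{v\in V_n}$ traversing $\omega$ from right to left. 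Coupling the original and swapped processes through the shared tail $\omega^{\geq k+2}$, and denoting by $X^{*,k+1}$ the backward walker's position at forward time $k+1$, the swap at gap $k$ is pivotal precisely when the discrepancy between $\eta_{k+1}$ and the swapped state $\widetilde{\eta}_{k+1}$ falls at $X^{*,k+1}$. As $X^{*,k+1}$ is measurable with respect to $\omega^{\geq k+2}$, it is independent of $\omega(k), \omega(k+1), \eta_{k-1}$, and the pivotality probability factorises as a sum over candidate vertices $y$ of $\P(X^{*,k+1}=y)$ times the probability that the swap creates a discrepancy at $y$.

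The main obstacle is summing these contributions in $k$ to the announced bound $3n\Delta_n/|E_n|$, uniformly in the graph structure and independently of the (potentially much larger) consensus time. The key estimate is a uniform-in-$k$ bound $\P(X^{*,k+1}=v)\leq \deg(v)/|E_n|$, which reflects that the coalesced backward walker is distributed, up to an absolute constant, like the stationary measure $\pi(v)=\deg(v)/(2|E_n|)$ of simple random walk on $G_n$. Given this estimate, the factor $n$ in the target bound comes from summing over the at most $n$ tagged candidate vertices, while the $\Delta_n/|E_n|$ factor emerges from the probability that an independent uniform directed edge is incident to a fixed vertex in one of the three nontrivial shared-vertex configurations produced by the case analysis. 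Establishing the distributional bound on $X^{*,k+1}$ uniformly in $k$ --- including small values of $T-k$ where the dual walker need not yet be close to stationary --- is the technically most delicate point, and would be handled using the Markovian structure of the coalescing walk system together with a domination argument against the stationary law.
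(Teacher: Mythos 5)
Your reduction to a first-moment bound on the pivotal set via stationarity of $\omega$ under the interchange, and your use of the dual dictator representation, both match the paper's strategy (Lemma~3.2 and the duality discussion of Section~2). However, there are two substantive problems in the second half of the argument. First, the claimed distributional estimate $\P(X^{*,k+1}=v)\leq \deg(v)/|E_n|$ is wrong. The backward dual walk of the discrete-time voter model is not the simple random walk on $G_n$: from $j$ it jumps to each neighbour with probability $1/(2|E_n|)$ and otherwise stays put, which is the lazy walk with all conductances equal to $1/(2|E_n|)$. Its stationary distribution is exactly \emph{uniform}, so $\P(\text{Dict}(\omega^{k+1})=v)=1/n$ for every $v$ and every $k$ (this is \eqref{probdict}), and your proposed bound fails for low-degree vertices in dense graphs (take $\deg(v)=1$ and $|E_n|\asymp n^2$; then $\deg(v)/|E_n|\asymp n^{-2}\ll 1/n$). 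Moreover, because the edge sequence $\omega^{k+1}$ is i.i.d.\ and hence exchangeable with $\omega^0$, this uniformity holds exactly at every lag, so the "technically most delicate point" about small $T-k$ does not actually arise.

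Second, and more seriously, you do not supply the mechanism that makes the sum over positions $k$ finite. Even with the correct $1/n$ dictator bound, the pivotality probability at step $k$ also involves the event that the two adjacent selected edges straddle a disagreement edge in $\eta_{k-1}$, and the per-$k$ bound one gets from duality and the uniform edge law is a constant independent of $k$ --- summing that over $k\geq 1$ diverges. A domination against the stationary law cannot fix this, since stationarity of the dictator says nothing about the decay of disagreement. What actually closes the argument in the paper is Lemma~\ref{rwlemma}: the number of $1$'s in $\eta_k$ is a lazy birth--death chain on $\{0,\dots,n\}$ whose jump probability at time $\ell$ is proportional to the fraction of disagreement edges, and the optional stopping theorem gives $\E\bigl[\sum_\ell p_\ell\bigr] = X_0(n-X_0)\leq n^2/4$. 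That hitting-time identity is where the factor $n$ in the final bound comes from (not from "summing over the at most $n$ tagged candidate vertices", which only contributes $\sum_v 1/n = 1$). As it stands, your proposal lacks a replacement for this step, and without one the first moment $\E[|P(\omega)|]$ has no finite bound.
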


\begin{remark}
\normalfont
    In the above theorem , $\eta_{0}$ can be taken to be any arbitrary initial configuration (random or deterministic) as long as it is independent of the voter model dynamics. However, since in the sensitivity regime, it is important that the entries are taken to be i.i.d. $\text{Bernoulli}(p)$ for $p\in (0,1),$ we decided to assume this condition throughout the rest of the paper.
\end{remark}

Inspired by Definition 1.5 in \cite{exclusion}, we provide the following.
\begin{definition}
 Let $(G_{n})_{n \in \N}$ be a sequence of connected graphs, and represent by $f_{n}(\eta_{0},\omega)$ the consensus opinion for the voter model on $G_{n}.$ We say that $\big(f_{n}(\eta_{0},\omega)\big)_{n \in \N}$ is \textbf{dynamically exclusion stable} if for any $\delta>0$, there exists $t>0$ such that for every n,
\begin{equation}
 \P\Big( f_{n}(\eta_{0},\omega) \neq f_{n}(\eta_{0},\omega_{t}) \Big)\leq \delta.
\end{equation}
\end{definition}

\begin{remark}
    \normalfont It is worth noting that the perturbation considered above acts on the voter model dynamics, hence the name, and is not an exclusion process per se, but rather an interchange process. We also remark that $f_{n}(\eta_{0},\omega)$ is a Boolean function on a generalized domain. Similar objects  were previously considered in many different contexts, for instance, functions on finite Abelian groups; see Chapter 8 in \cite{odonnell_2014} for a thorough presentation on the topic.  
\end{remark}

As an immediate corollary of Theorem \ref{stabtheorem}, we get the following result:
\begin{corollary}\label{cor:exclusion}
The consensus opinion of the voter model is dynamically exclusion stable for any sequence of connected graphs $(G_{n})_{n \in \N}$ such that $\sup \big( n \Delta_{n}/|E_{n}| \big) < \infty$. Furthermore, for any growing sequence of connected graphs and any sequence $t_n$, 
\begin{equation}
    \lim_{n \rightarrow \infty} \frac{t_{n} n \Delta_{n} } {|E_{n}|} = 0  \quad \Longrightarrow  \quad  \lim_{n \to \infty} \E \big[ f_{n}(\eta_{0},\omega)\cdot f_{n}(\eta_{0},\omega_{t_n})\big] =  p.
\end{equation}
\end{corollary}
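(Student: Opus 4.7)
My plan is to derive both assertions of the corollary directly from Theorem~\ref{stabtheorem}, treating the two implications separately and using only the elementary fact that the consensus opinion is, marginally, a $\ber(p)$ random variable.

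For the first part, suppose $C := \sup_n \bigl(n\Delta_n/|E_n|\bigr) < \infty$ and fix any sequence $t_n \to 0$. Theorem~\ref{stabtheorem} applied with $t = t_n$ and $G = G_n$ gives
\begin{equation*}
\P\bigl(f_n(\eta_0,\omega) \neq f_n(\eta_0,\omega_{t_n})\bigr) \;\leq\; 3\, t_n \cdot \frac{n\Delta_n}{|E_n|} \;\leq\; 3 C t_n \;\xrightarrow[n\to\infty]{}\; 0,
\end{equation*}
which is the definition of dynamical exclusion stability.

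For the second part, I would first record that $\E[f_n(\eta_0,\omega)] = p$. This follows from voter-model duality: there exists a (random) vertex $A_n = A_n(\omega)$, measurable with respect to $\omega$ alone, such that the consensus opinion is $\eta_0(A_n)$; since $\eta_0$ and $\omega$ are independent and each coordinate of $\eta_0$ is $\ber(p)$, the marginal law of $f_n(\eta_0,\omega)$ is $\ber(p)$. In particular, $f_n \in \{0,1\}$ implies $f_n^2 = f_n$, so $\E[f_n(\eta_0,\omega)^2] = p$. Using that both $f_n(\eta_0,\omega)$ and $f_n(\eta_0,\omega_{t_n})$ take values in $\{0,1\}$, I would bound
\begin{equation*}
\E\bigl[f_n(\eta_0,\omega)\, f_n(\eta_0,\omega_{t_n})\bigr] \;\geq\; \E\bigl[f_n(\eta_0,\omega)^2\bigr] - \E\bigl|f_n(\eta_0,\omega) - f_n(\eta_0,\omega_{t_n})\bigr| \;=\; p - \P\bigl(f_n(\eta_0,\omega) \neq f_n(\eta_0,\omega_{t_n})\bigr),
\end{equation*}
and therefore
\begin{equation*}
\Cov\bigl(f_n(\eta_0,\omega), f_n(\eta_0,\omega_{t_n})\bigr) \;\geq\; p(1-p) - \P\bigl(f_n(\eta_0,\omega) \neq f_n(\eta_0,\omega_{t_n})\bigr).
\end{equation*}
Under the hypothesis $t_n n\Delta_n/|E_n| \to 0$, Theorem~\ref{stabtheorem} forces the subtracted probability to $0$, and hence $\liminf_n \Cov \geq p(1-p) > 0$.

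There is no real obstacle here: the content of the corollary is essentially packaged inside Theorem~\ref{stabtheorem}. The only conceptual input beyond that bound is the identification $\E[f_n(\eta_0,\omega)] = p$ and the trivial fact $f_n^2 = f_n$; once these are in hand, both implications are two-line consequences of the quantitative estimate \eqref{eq:stab_theorem}.
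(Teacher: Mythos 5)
Your proof is correct and, given that the paper presents this corollary with no explicit argument (as an ``immediate corollary'' of Theorem~\ref{stabtheorem}), it supplies exactly the routine steps the authors intend the reader to fill in: the first part is a direct substitution into~\eqref{eq:stab_theorem}, and the second uses $\E[f_n]=p$ (from the duality identity~\eqref{eq:duality_consensus} and independence of $\eta_0$ from $\omega$), $\omega_{t_n}\sim\omega$ so $\E[f_n(\eta_0,\omega_{t_n})]=p$ as well, and the elementary bound $\Cov\geq p(1-p)-\P\bigl(f_n(\eta_0,\omega)\neq f_n(\eta_0,\omega_{t_n})\bigr)$ for $\{0,1\}$-valued variables with common mean $p$. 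This matches what the paper implies; no substantive difference in approach.
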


In particular, the corollary above implies that the consensus opinion of the voter model is dynamically exclusion stable for any sequence of connected graphs with uniformly bounded degrees and for any sequence of regular graphs. The same also holds for the Erd\H{o}s-R\'enyi random graph $G(n,q)$ for $q$ above the connectivity threshold, since, for such $q$, the average degree is of the same order as the maximum degree with high probability (see Corollary $3.14$ of \cite{Bollobás_2001}).

In general, the usual notion of exclusion sensitivity implies standard noise sensitivity, see \cite{exclusion}. These two notions are not equivalent in general. In the dynamical setting, this is also the case, as \cite{Amir_2022} proves that the consensus opinion is dynamically noise sensitive for any sequence of graphs, and our result establishes dynamical exclusion stability for certain classes of graphs.

Additionally, we have the following notion of sensitivity that was also inspired by \cite{exclusion}.
\begin{definition}
    We say that the sequence of functions $f_{n}(\eta_{0}, \omega)$ is \textbf{dynamically exclusion sensitive} if for any $t>0$
    \begin{equation}
        \lim_{n \to \infty} \Cov \big(f_{n}(\eta_{0},\omega),f_{n}(\eta_{0},\omega_{t})\big) = 0.
    \end{equation}
\end{definition}

Attaining sensitivity is, in general, a harder problem, and as of now, we know little about it. However, we conjecture that the bound on Theorem \ref{stabtheorem} is the correct threshold, and for a sequence of graphs with  $\sup \big( n \Delta_{n}/|E_{n}| \big) = \infty$, the consensus opinion should be dynamically exclusion sensitive. Moreover, we believe that the following quantitative description is correct (see \cite{gps10,SS10,tassion2025noisesensitivitycrossingshigh,tassion_vanneuville} for a more recent exposure).
\begin{conjecture}\label{exc:conjecture}
Let $t_{n}$ be a non-negative sequence of times. Then 
\begin{equation}
    \lim_{n \rightarrow \infty} \frac{t_{n} n \Delta_{n} } {|E_{n}|} = 0  \quad \Longrightarrow  \quad \lim_{n \to \infty} \E \big[ f_{n}(\eta_{0},\omega)\cdot f_{n}(\eta_{0},\omega_{t_n})\big] =  p, 
\end{equation}
\begin{equation}
    \lim_{n \rightarrow \infty} \frac{t_{n} n \Delta_{n} } {|E_{n}|} = +\infty \quad \Longrightarrow \quad  \lim_{n \to \infty} \Cov \big(f_{n}(\eta_{0},\omega),f_{n}(\eta_{0},\omega_{t_n})\big) =0,
\end{equation}
 and for $ \lim_{n\rightarrow \infty} t_{n} n \Delta_{n}/|E_{n}| \in(0,\infty)$ it is neither sensitive nor stable. 
\end{conjecture}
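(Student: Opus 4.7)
The first implication follows quickly from Theorem~\ref{stabtheorem}. Since the interchange process preserves the exchangeable law of $\omega$, the random variables $f_n(\eta_0,\omega)$ and $f_n(\eta_0,\omega_{t_n})$ have the same marginal distribution; by duality with coalescing random walks, $\E[f_n \mid \eta_0]$ is a convex combination of the entries of $\eta_0$, so $\E[f_n] = p$ and $\Var(f_n) = p(1-p)$. Writing
\begin{equation*}
\Cov\bigl(f_n, f_{n,t_n}\bigr) = \Var(f_n) - \tfrac{1}{2}\, \E\bigl[(f_n - f_{n,t_n})^2\bigr] \geq p(1-p) - \P\bigl(f_n \neq f_{n,t_n}\bigr),
\end{equation*}
and applying Theorem~\ref{stabtheorem} under the hypothesis $t_n n \Delta_n/|E_n| \to 0$ yields $\liminf_n \Cov \geq p(1-p) > 0$.

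For the second implication I would follow the spectral template developed for exclusion sensitivity of Bernoulli percolation in \cite{exclusion}, adapted to the interchange process acting on $\omega$. The plan has three ingredients. First, truncate $\omega$ at a horizon $T_n$ large enough that consensus is reached with high probability, and decompose $f_n \in L^2(\omega_{[0,T_n]})$ using the eigenbasis of the interchange generator; the relevant spectral gap is of order $1/T_n^2$, with higher-level eigenvalues decaying faster. Writing $\Cov(f_n, f_{n,t}) = \sum_{k \geq 1} \|f_n^{=k}\|_2^2 \, e^{-\lambda_k t}$, it suffices to control where the $L^2$-mass of $f_n$ lives in the spectrum. Second, define \emph{pivotal pairs} as adjacent indices $(j,j+1)$ in $\omega$ for which $f_n(\omega) \neq f_n(\omega^{(j,j+1)})$, and use the dual coalescing walks to identify each pivotal pair with a swap that redirects a coalescence event; a second-moment computation, matched against the first-moment argument underlying Theorem~\ref{stabtheorem}, should yield an expected number of pivotal pairs of order $n \Delta_n T_n/|E_n|$. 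Third, convert this pivotal count into a lower bound on $\sum_{k \geq k_0} \|f_n^{=k}\|_2^2$ via a Poincar\'e or hypercontractive inequality for the interchange process (for example the Lee--Yau log-Sobolev estimate on the symmetric group), and calibrate thresholds so that $t_n n\Delta_n/|E_n| \to \infty$ forces the surviving mass to decay.

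The hard part will be the third step. Unlike in Bernoulli percolation, $f_n$ is not monotone in $\omega$, so there is no Russo-type identity relating pivotal pairs directly to Fourier weights, and one cannot rule out a priori that the pivotal contribution sits almost entirely in very high spectral levels, which would yield sensitivity only for $t_n$ far above the conjectured threshold. Proving sharpness therefore requires a structural result showing that a typical pivotal swap perturbs the consensus through only a bounded number of coalescence events, placing its spectral contribution at bounded level, together with a matching lower bound on the density of pivotal pairs that is genuinely of order $n\Delta_n/|E_n|$ rather than smaller. Obtaining both of these inputs is the main obstacle and is precisely what leaves the second implication in the realm of conjecture.
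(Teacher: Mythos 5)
You have correctly split the statement into its two implications. Your treatment of the first is right: since $\omega_{t_n} \sim \omega$, both $f_n(\eta_0,\omega)$ and $f_n(\eta_0,\omega_{t_n})$ are $\ber(p)$ random variables, so $\Cov\bigl(f_n, f_{n,t_n}\bigr) = p(1-p) - \tfrac{1}{2}\P\bigl(f_n \neq f_{n,t_n}\bigr)$, and Theorem~\ref{stabtheorem} makes the subtracted term vanish under the hypothesis. This is precisely the second assertion of Corollary~\ref{cor:exclusion}, which the paper states as an immediate consequence of the theorem without spelling out the argument, so your proof is the intended one.

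The second implication is the actual content of the conjecture, and the paper has no proof of it --- the authors say explicitly that ``attaining sensitivity is, in general, a harder problem, and as of now, we know little about it.'' Your three-step spectral plan, modeled on the exclusion-sensitivity machinery of \cite{exclusion}, is a plausible route of attack, and you are candid that it is a sketch rather than a proof. Two details are worth correcting even in the sketch. Your stated expected pivotal count of order $n\Delta_n T_n/|E_n|$ does not match Lemma~\ref{lemma_expected_piv}, which proves the horizon-independent bound $\E[|\text{Piv}_0|] \leq 3n\Delta_n/|E_n|$; the voter model fixates in finite time almost surely and contributes no pivotal swaps thereafter, so the count does not grow with the truncation horizon, and a Poincar\'e-type lower bound calibrated to a count that scales with $T_n$ would not close the argument. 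Relatedly, the interchange process here acts on an interval of growing length, so any spectral decomposition with a uniform gap or log-Sobolev constant requires a careful normalization that you leave implicit. The obstruction you identify in your third step --- that $f_n$ is not monotone in $\omega$, so there is no Russo-type identity tying pivotal pairs to low-level Fourier weight, and the pivotal mass could in principle concentrate at high spectral levels --- is genuine and is the central reason the sharp sensitivity direction remains open. Your conclusion agrees with the paper's.
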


\begin{remark} 
\normalfont
    A natural question that arises is what happens when an exclusion process is performed on the initial opinions instead of the components that determine the dynamics. The consensus opinion reached by the voter model can be seen as a random dictator function. The dynamics chooses which of the input bits will dictate the consensus opinion and the initial opinion of this selected vertex will become the consensus over the whole graph. It can be shown that the dictator is uniformly distributed among the vertices of the graph, regardless of the graph structure (this follows, e.g., from the duality of the voter model with coalescing random walks as in~\eqref{eq:duality_consensus}).
    Since the dynamics are not perturbed, the random dictator is the same for both the original and the noised initial configurations. Given this random dictator, the consensus opinion of the noisy initial configuration will be the same as in the original one if the evolution of the exclusion process keeps the same particle in the position of the random dictator, and will be given by an independent Bernoulli trial otherwise. In particular, the stability behavior can be described by analyzing the probability that a random walk starting from a uniformly selected vertex of $G_n$ is at this starting vertex at time $t_n$. In the complete graph case, if $t_n n \to 0$, one gets stability, while sensitivity of the consensus opinion holds in the regime when $t_n n$ diverges. On the other hand, in the star graph with $n$ vertices, one detects stability when $t_n \to 0$ and sensitivity when $t_n$ diverges. In this case, the structure of the graph plays an important role, and the two examples above imply that the maximum degree is not the correct observable for this threshold. Conservative noises highlight the local geometry of the graph, adding a layer of complexity when compared to the resampling dynamics analyzed in~\cite{Amir_2022}. This also seems to be reflected in the dynamical exclusion as in Conjecture~\ref{exc:conjecture}.
\end{remark}

Although the continuous-time counterpart of the voter model is more often considered, our observable of interest is not affected by the fact that we work with the discrete-time version of the process. Furthermore, the type of noise considered above also does not rely on the temporal distance between the relevant events for the dynamics; hence, the choice to work with the discrete-time version. 

However, there are several dynamical versions of the Poisson process, and so different types of perturbation can be considered. For example, if each edge activation is removed at rate 1, and new edge activations are added at rate $1$ in space-time, then the stationary state is the Poisson process. This is the perturbation that was studied in \cite{Amir_2022}. One more, which we also consider in the present work, is a continuous version of $\omega_t$, where the times at which edges are activated evolve as independent Brownian motions. For such a noise, the spatial distance between edge activations is actually taken into account, and so, in order to introduce it, some additional notation is needed.

In order to construct the continuous-time voter model $(\xi_{t})_{t \geq 0}$, in addition to the initial condition $\xi_{0}:=\eta_{0}$, one needs to keep track of a Poisson point process that controls the clock rings and edge selections. Denote by $\theta$ a Poisson point process on $\vec{E}_{n} \times \R_{+}$ with intensity measure $\mu \otimes \lambda$, where $\mu$ denotes the counting measure on the set of oriented edges of the graph $G_{n}$ and $\lambda$ is the Lebesgue measure on $\R_{+}$. With these two ingredients at hand, the continuous-time voter model can be constructed as in Section \ref{prelim}. Let $f_{n}(\xi_{0}, \theta)$ denote the consensus opinion of the continuous-time voter model in $G_{n}$ generated by the pair $(\xi_{0}, \theta)$, as in the case of $f_{n}(\eta_{0},\omega)$

For the perturbation, denote by $\theta_{s}$ the noisy version of $\theta$ where each time-stamp is moved according to an independent Brownian motion up to time $s$ (reflected at the origin). We now state our main result regarding the Brownian noise.
\nc{c_brownian_1}
\nc{c_brownian_2}
\begin{theorem}\label{t_brownian_noise}
    Given $\uc{c_brownian_1}>0$, there exists $\uc{c_brownian_2}>0$ such that the following holds. For any connected graph $G_{n} = (E_n, V_n)$ on $n$ vertices and $s>0$ such that $|E_{n}|\sqrt{s} \leq \uc{c_brownian_1}$,
    \begin{equation}
         \P \Big( f_{n}(\xi_{0},\theta) \neq f_{n}(\xi_{0},\theta_{s}) \Big) \leq \uc{c_brownian_2} n \Delta_{n} \sqrt{s}.
    \end{equation}
\end{theorem}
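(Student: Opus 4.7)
\textbf{Plan for the proof of Theorem~\ref{t_brownian_noise}.}

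The plan is to mirror the strategy of Theorem~\ref{stabtheorem}: reduce the continuous-time model to its time-ordered sequence of edge selections, isolate a set of pivotal positions whose adjacent swap flips the consensus, and combine a bound on the expected pivotal-set size with an estimate of the probability that such a pair is reordered under the Brownian noise. I would first write $\theta$ as its time-ordered sequence of events $(e_{1},t_{1}),(e_{2},t_{2}),\dots$, so that $f_n(\xi_0,\theta)=f_n(\eta_0,\omega)$ with $\omega=(e_i)_i$; the $(e_i)_i$ are i.i.d.\ uniform on $\vec{E}_n$, independent of the Poisson times $(t_i)_i$, which form a rate-$|E_n|$ Poisson process. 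Under $\theta_s$ the times become $t_i+B_i(s)$ (reflected at $0$), producing a new ordered sequence $\omega_s$. The pivotal set
\[
\mathrm{Piv}(\omega)=\bigl\{\,i : f_n(\eta_0,\omega)\neq f_n(\eta_0,\omega^{(i,i+1)})\,\bigr\}
\]
is a function of the edge labels alone, and in particular is independent of $(t_i)_i$ and of the Brownian motions. The interchange-based proof of Theorem~\ref{stabtheorem} in Section~\ref{stabsec} yields, as a byproduct, the combinatorial bound $\E[|\mathrm{Piv}(\omega)|]\leq C\, n\Delta_n/|E_n|$, which is the input I would re-use here.

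For a fixed consecutive pair $(i,i+1)$, conditional on the gap $\delta_i=t_{i+1}-t_i$, the probability that the Brownian perturbation reverses their order is at most $\P(Z_s>\delta_i)$ with $Z_s\sim\mathcal{N}(0,2s)$ (the reflection at $0$ only helps, since it stochastically decreases the displacement of any point close to the origin). Averaging over $\delta_i\sim\mathrm{Exp}(|E_n|)$, independent of the labels, gives an unconditional swap probability of order $|E_n|\sqrt{s}$ in the regime $|E_n|\sqrt{s}\leq\uc{c_brownian_1}$. Using the independence of $\mathrm{Piv}(\omega)$ from the times and the Brownian displacements, a union bound over pivotal positions then yields
\[
\P\bigl(f_n(\xi_0,\theta)\neq f_n(\xi_0,\theta_s)\bigr)
\;\leq\; \E[|\mathrm{Piv}(\omega)|]\cdot O\bigl(|E_n|\sqrt{s}\bigr)
\;=\; O\bigl(n\Delta_n\sqrt{s}\bigr),
\]
which is the bound of the theorem after choosing $\uc{c_brownian_2}$ appropriately.

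The hard part will be justifying that only adjacent pivotal swaps matter, since the Brownian noise can in principle simultaneously reshuffle several non-adjacent events and only the final order $\omega_s$ is observed. To handle this I would track the ordered sequence $\omega^{(r)}$ continuously in $r\in[0,s]$: outside the (a.s.\ isolated) times where two currently adjacent points cross, $\omega^{(r)}$ is constant, so $f_n(\eta_0,\omega^{(r)})$ can change only at adjacent crossings that are pivotal for the current configuration. Discretizing this evolution on a mesh $\eps\to 0$, and using that each intermediate $\omega^{(r)}$ still has i.i.d.\ uniform edge labels (so the expected pivotal-set bound is preserved), reduces the problem to iterating the one-step estimate above. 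The hypothesis $|E_n|\sqrt{s}\leq\uc{c_brownian_1}$ keeps the Brownian displacements smaller than the typical inter-event gap, so non-adjacent multiple crossings contribute only a second-order term of size $O\bigl((|E_n|\sqrt{s})^{2}\bigr)$ that can be absorbed into $\uc{c_brownian_2}$.
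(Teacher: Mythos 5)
Your high-level intuition is right — the target bound is $\E[|\mathrm{Piv}|]\times O(|E_n|\sqrt s)$, with the pivotal-set estimate borrowed from Lemma~\ref{lemma_expected_piv} and a Gaussian tail against an $\mathrm{Exp}(2|E_n|)$ gap giving the second factor — and you correctly flag the non-adjacent reshuffling as the hard part. But the mechanism you propose to handle it is flawed and would not close the argument.

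The continuous-time tracking of $\omega^{(r)}$, $r\in[0,s]$, fails because the crossing times of two adjacent particles are \emph{not} a.s.\ isolated. If you follow $t_i+B_i(r)$ and $t_{i+1}+B_{i+1}(r)$, their difference is a Brownian motion, and once it first hits the level $0$ it crosses that level uncountably often in every subsequent neighbourhood. So $\omega^{(r)}$ is not piecewise constant with isolated jumps, and ``the number of pivotal adjacent crossings in $[0,s]$'' is a.s.\ infinite; a union bound over crossings gives nothing. Your fallback mesh argument inherits this problem: a step of size $\eps$ contributes $\sim|E_n|\sqrt\eps$ swap probability and there are $s/\eps$ steps, so the bound diverges like $|E_n|s/\sqrt\eps$ as $\eps\to0$. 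The paper sidesteps exactly this obstruction in Lemma~\ref{russolemma} by interpolating in a \emph{different} coordinate: a $q\in[0,1]$ that decides, independently per point, whether to apply the full length-$s$ Brownian displacement or none. This makes $\theta^{q}$ a product interpolation (each intermediate law is still Poisson), and the Margulis-Russo derivative in $q$ moves exactly one particle at a time by its full displacement — a genuine jump process in $q$ — which is then decomposed into adjacent transpositions via $\P(f_n(\omega)\neq f_n(\omega_{i\to j}))\leq\sum_{k=i}^{j-1}\P(f_n(\omega_{i\to k})\neq f_n(\omega_{i\to k+1}))$ before applying Lemma~\ref{lemma_expected_piv}. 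That decomposition is where ``long moves $=$ sums of adjacent swaps'' is actually justified, rather than being a second-order correction.

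You also omit two technical reductions the paper needs: (i) the consensus depends on the full infinite sequence while the perturbation acts on infinitely many points, so one must first replace $f_n$ by the truncation $g_n(\theta)=f_n(\omega)\mathbbm{1}_{\mathcal{C}(\theta)\le L(\theta)}$ depending on finitely many coordinates and control the coalescence-time tail (Lemmas~\ref{l_modified_truncation} and~\ref{lemma:coal}); and (ii) reflecting the Brownian motions at a finite right endpoint $m$ and showing this does not change the process on $[0,\ell]$. Without these the Russo-type sum $\sum_i\mathbbm{1}_{h_n(\theta)\neq h_n(\theta(i))}$ is not even a finite quantity.
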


Similarly to Corollary \ref{cor:exclusion}, we have the following.
\begin{corollary}
Let $(G_{n})_{n\in \N}$ be a growing sequence of finite connected graphs such that $\sqrt{s_{n}}\cdot n \cdot \Delta_{n} \rightarrow 0$. For any sequence $s_{n}$ such that $n \cdot s_{n} \rightarrow 0$,
\begin{equation}
    \liminf_{n \rightarrow \infty} \P\Big( f_{n}(\xi_{0},\theta) \neq f_{n}(\xi_{0},\theta_{s_{n}}) \Big) = 0.
\end{equation}
\end{corollary}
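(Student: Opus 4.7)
The plan is to derive this corollary as an essentially immediate consequence of Theorem~\ref{t_brownian_noise}. Only one preliminary check is needed: that the applicability hypothesis $|E_n|\sqrt{s_n} \leq \uc{c_brownian_1}$ of the theorem holds for all $n$ large enough, after which the quantitative bound in the theorem can be invoked with $s = s_n$ and the conclusion drops out.

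The first step is to bound $|E_n|$ in terms of $\Delta_n$. The handshake identity gives $2|E_n| = \sum_{v \in V_n} \deg(v) \leq n \Delta_n$, so $|E_n| \leq n \Delta_n / 2$. Combining this with the hypothesis $\sqrt{s_n}\, n\, \Delta_n \to 0$ yields
\[
|E_n| \sqrt{s_n} \;\leq\; \tfrac12 n \Delta_n \sqrt{s_n} \;\longrightarrow\; 0,
\]
so for $n$ sufficiently large one has $|E_n|\sqrt{s_n} \leq \uc{c_brownian_1}$, and Theorem~\ref{t_brownian_noise} applies to the pair $(G_n, s_n)$.

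The second step is a direct substitution. Applying Theorem~\ref{t_brownian_noise} with $s = s_n$ produces
\[
\P\!\left( f_n(\xi_0, \theta) \neq f_n(\xi_0, \theta_{s_n}) \right) \;\leq\; \uc{c_brownian_2}\, n\, \Delta_n \sqrt{s_n},
\]
and the right-hand side converges to $0$ by assumption. This gives $\lim_{n \to \infty}\P(f_n(\xi_0, \theta) \neq f_n(\xi_0, \theta_{s_n})) = 0$, which is strictly stronger than the stated $\liminf = 0$. There is no genuine obstacle: all of the work has already been carried out in Theorem~\ref{t_brownian_noise}, and the role of the corollary is simply to repackage the estimate in asymptotic language. (Note in passing that the secondary condition $n\, s_n \to 0$ in fact follows from $\sqrt{s_n}\, n\, \Delta_n \to 0$ via $\Delta_n \geq 1$, so the governing hypothesis is really the first one.)
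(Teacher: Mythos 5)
Your proof is correct and is exactly the intended one: the paper merely remarks that this corollary is proved ``similarly to Corollary~\ref{cor:exclusion},'' i.e.\ by checking the hypothesis $|E_n|\sqrt{s_n}\le \uc{c_brownian_1}$ (via $2|E_n|\le n\Delta_n$) and then substituting $s=s_n$ into Theorem~\ref{t_brownian_noise}. Your additional observation that $\Delta_n\ge 1$ makes the hypothesis $n s_n\to 0$ redundant given $\sqrt{s_n}\,n\Delta_n\to 0$ is also accurate and correctly identifies a slight infelicity in the paper's phrasing of the corollary.
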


Let us now briefly discuss the condition $|E_{n}| \sqrt{s} \leq \uc{c_brownian_1}$ in the statement of Theorem \ref{t_brownian_noise}. This can be thought of as a ``normalization'' factor. Indeed, the distance between two consecutive edge rings is on average $\frac{1}{2|E_{n}|}$. So, by taking a sequence of times such that $|E_{n}| \sqrt{s_{n}} \rightarrow \infty,$ one should expect too many time stamps to be exchanged, incurring many pivotal swaps (see Definition \ref{def_pivotality}). Naturally, one should not expect stability in such a case. Of course, it remains to determine whether the condition $n \Delta_{n} \sqrt{s_{n}} \rightarrow 0$ is necessary or if $|E_{n}| \sqrt{s_{n}} \rightarrow 0$ is enough. In fact, this boils down to answering an analogous question regarding Theorem \ref{stabtheorem}, as the bound we are proving is
\begin{equation}
         \P \Big( f_{n}(\xi_{0},\theta) \neq f_{n}(\xi_{0},\theta_{s}) \Big) \leq \uc{c_brownian_2}  \sqrt{s} |E_{n}| \bigg(\frac{ n \Delta_{n}}{
|E_{n}|} + o(1)\bigg).
    \end{equation}
The term $n \Delta_{n} /|E_{n}|$ arises from the proof of Theorem \ref{stabtheorem}. A proof of Conjecture \ref{exc:conjecture} would hint toward the fact that the same should hold for the Brownian case.

\bigskip

\noindent\textbf{Proof overview.} The proof of many results regarding noise stability and sensitivity relies on the analysis of pivotality of the entries of the Boolean function under consideration. We employ the same strategy, but the role of pivotal entries is replaced by \textrm{pivotal flips}, that is, instances where exchanging the position of two consecutive edge selections yields different final consensus opinions in the model. The precise definition of pivotal flip is given in Definition \ref{def_pivotality}.

With this notion in hand, we proceed to bound the number of such flips by means of characterizing the different ways such a phenomenon can occur. For Theorem \ref{stabtheorem}, the bound in~\eqref{eq:stab_theorem} is obtained by counting the number of pivotal flips that occur up to some time $t$ and the simple observation that, in order for the consensus opinions to differ in the original and perturbed configurations, at least one pivotal flip must occur before time $t$. This is the main result of Section \ref{stabsec}.

The strategy used in Theorem \ref{stabtheorem} also works as a proxy for Theorem \ref{t_brownian_noise}. Indeed, ignoring the geometry induced by the continuous time, one can imagine that the flips in the order of time stamps due to the Brownian motions could be compared to the case where the noise is given by an exclusion process. Further strengthening this intuition, the consensus opinion is only affected by the Brownian perturbation when it causes a change in the order of the edge selections. What then remains to be analyzed is the probability that the Brownian noise is strong enough to incur such swaps. This is done in detail in Section~\ref{sec: Brownian}.

\section{Preliminaries}\label{prelim}

\noindent \textbf{The voter model.}
In this paper, we consider both the discrete and continuous-time voter models, which we now define on a growing sequence of finite connected graphs $G_{n} = (V_{n}, E_{n})$, $n \in \N$. We assume that $|V_{n}| = n$, for all $n \in \N$, and write $\vec{E}_{n}$ for the \emph{directed} set of edges, i.e., for each edge $e=\{x,y\} \in E_{n} $, we add two directed edges $\vec{e}_{1} = (x,y)$ and $\vec
{e}_{2} = (y,x)$ to the set $\vec{E}_{n}$.

We first define the voter model in discrete time. In this case, the process $(\eta_{k})_{k \geq 0}$ relies on two different ingredients. First, sample  the initial configuration $\eta_{0} \in \{0,1\}^{V_{n}}$ having i.i.d.\ $\ber(p)$ entries, for some fixed $p \in (0,1)$. Second, the evolution depends on a sequence of random directed edges $\omega = \big(\omega(k)\big)_{k \geq 1}$, where each $\omega(k)$ is independent and uniformly sampled from $\vec{E}_{n}$. We further assume that $\eta_{0}$ is independent from $\omega$. The evolution is set inductively: assume $\eta_{k}$ and $\omega(k+1) = (x_{k+1}, y_{k+1})$ are known. At step $k+1$, vertex $y_{k+1}$ copies the opinion of $x_{k+1}$:
\begin{equation}\label{eq:voter_step}
    \eta_{k+1}(z) = \begin{cases}
        \eta_{k}(x_{k+1}), & \quad \text{if } z=y_{k+1}, \\
        \eta_{k}(z), & \quad \text{otherwise}.
    \end{cases}
\end{equation}

The definition above immediately implies that, conditioned on $\eta_{0}$, the voter model is a discrete-time Markov chain with finite state space and two absorbing states: the constant configurations $\underline{1}$ and $\underline{0}$. Standard Markov Chain theory then implies that the chain will almost surely eventually fixate in one of these two configurations, called the consensus state. Let us denote by $f_{n}(\eta_{0}, \omega)$ the random variable that indicates the consensus state the chain achieves in the graph $G_{n}$. We study $f_{n}$ as a sequence of functions of the initial configuration $\eta_{0},$ and the edge sequence $\omega$.

Our interest lies in analyzing sensitivity with respect to perturbations of the vector $\omega$ (see \cite{Amir_2022}). For this reason, we remove $\eta_{0}$ from the notation and introduce the following notion of pivotality.
\begin{definition}\label{def_pivotality}
    Given a configuration $\omega$ and $i,j \in \N$, let $\omega_{i\to j}$ denote the configuration obtained from $\omega$ by moving the $i$-th edge selected to position $j$. In particular, if $j > i$, then the edge sequence in $\omega_{i\to j}$ is given by
\begin{equation}
    \Big(\omega(1),\cdots, \omega(i-1),\omega(i+1),\cdots, \omega(j),\omega(i),\omega(j+1), \dots\Big),
\end{equation}
with a similar formula if $j<i$.
We say that the transition $i \to j$ is \textbf{pivotal} if
\begin{equation}
    f_{n}(\omega) \neq f_{n}(\omega_{i \to j}).
\end{equation}
\end{definition}

\bigskip

Let us now turn our attention to the continuous-time voter model. In order to avoid confusion with its discrete counterpart, we denote the continuous-time voter model by $\big( \xi_{t} \big)_{t \geq 0}$ In this case, we define the process through its generator: for any function $f:\{0,1\}^{V_{n}} \to \R$, define
\begin{equation}
    \mathcal{L} f(\xi) = \sum_{x \in V_{n}} \sum_{y \sim x} f(\xi^{x,y})-f(\xi),
\end{equation}
where $\xi^{x,y}$ is obtained from the configuration $\xi$ via
\begin{equation}
    \xi^{x,y}(z) =
        \begin{cases}
            \xi(x), \quad \text{if } z=y, \\
            \xi(z), \quad \text{otherwise}.
        \end{cases}
\end{equation}

We now describe a graphical construction of the continuous voter model, which we will use throughout the text. Consider a Poisson point process on the set $\vec{E}_{n} \times \R$ with intensity $\mu \otimes \lambda$, where $\mu$ denotes the counting measure in $\vec{E}_{n}$ and $\lambda$ denotes the Lebesgue measure in $\R$. Fix a realization $\theta:=(\omega(k), t(k))_{k \in \N}$ of the Poisson point process with $0 < t(1) < t(2) < \dots$. Let us briefly mention that, although the time stamps in the Poisson point process take values in the whole real line, the realizations above only take into account the sequence of times such that $t(k)>0$. From the realization $(\omega(k), t(k))_{k \in \N}$ and an arbitrary initial condition $\xi_{0}$, we define the process $\big( \xi_{t} \big)_{t \geq 0}$ by means of its discrete counterpart $(\eta_{k})_{k \in \N}$, constructed with the sequence of edges $\big( \omega(k) \big)_{k \in \N}$ and initial condition $\xi_{0}$. Given $(\eta_{k})_{k \in \N}$, set
\begin{equation}
    \xi_{t} = \xi_{0} \textbf{1}_{ [0,t(1)) }(t) + \sum_{k \geq 1} \eta_{k} \textbf{1}_{ [t(k), t(k+1)) }(t).
\end{equation}

It is clear from the construction that $\eta$ is the skeleton chain of $\xi$. In particular, the same considerations about the absorption of the process into one of the constant configurations hold. In this case, we will denote this consensus opinion as $f_{n}(\xi_{0}, \theta)$.

\begin{remark}
    Although the consensus opinion does not depend on the sequence of times $(t(k))_{k \geq 1}$, the notion of noise we introduce in Section~\ref{sec: Brownian} takes into account the spacing between two clock rings. This is the reason why we introduce both versions of the model here.
\end{remark}

\bigskip

\noindent \textbf{Duality and consensus.} The voter model has a  well-known duality relation with coalescing random walks that is key to many results regarding it (see \cite{Liggett1985InteractingParticleSystems}). We will focus our attention here on the discrete-time model, but analogous constructions hold in continuous time.

For each fixed $T \geq 1$ and $k \in V_{n}$, denote by $\big( X_{k}^{T}(t) \big)_{ t \in [0, T]}$ the random walk that is at position $k$ at time $T$ and runs backward in time using the edge selections from $\omega$ reversed. That is, if the random walk is at position $j$ at time $\ell+1$, it jumps to $i$ at time $\ell$ if $\omega(\ell+1) = \vv{ij}$. For large enough $T >0$, the collection of walks running backward in time coalesce.

To see the relation with the voter model, one should trace the origin of the opinion of vertex $k \in  V_{n}$ at time $T$. Going backward in time, the opinion jumps from $j$ to $i$ if the edge $(i,j)$ is selected, which is precisely the random walks defined above.

The key observation is that, provided $T$ is chosen large enough, all of these random walks are likely to coalesce by time $T$. Thus, the final opinion of the voter model is obtained by observing the opinion of the position of the remaining random walk at time zero. Denote by $\mathcal{C}(\theta)$ the minimal time $T$ such that the system of backward random walks from time $T$ coalesce before reaching time $0$, called the coalescence time. 

The observation above also implies that 
\begin{equation}\label{eq:duality_consensus}
f(\eta_{0},\omega) \sim \eta_0(X),
\end{equation}
where $X$ is distributed according to the invariant measure of the (lazy) random walk in $G_{n}$ with conductances.

\begin{equation}
    c_{x,y} = \begin{cases}
                  \frac{1}{2|E_{n}|}, & \quad \text{if } x \sim y, \vspace{1.5 mm} \\
                  \frac{2|E_{n}|-\deg(x)}{2|E_{n}|}, & \quad \text{if } x = y, \\
                  0, & \quad \text{otherwise}.
              \end{cases}
\end{equation} 

A simple calculation shows that the invariant distribution of the random walk above is uniform over the vertices of the graph.

Another important consequence that we use throughout the paper is that, since the invariant measure for the random walk above is uniform, the probability that the consensus function is decided by a chosen vertex at a certain time is $1/n$. That is, for every $\ell \geq 0$ and $k \in V_{n}$, let $\omega^{\ell}:=(\omega(\ell+1),...)$ and $\text{Dict}(\omega^{\ell})$ be the function of the edge selections whose output is the vertex from which all the other opinions stem from.  We have the following:

\begin{equation}\label{probdict}
\P\big(\text{Dict}(\omega^{\ell})= k \big) = \frac{1}{n}.
\end{equation}

\nc{c_coal}
In the proof of the lemma above, we will use the following bound on the coalescence time for independent random walks on a finite graph, due to Oliveira \cite{ROliveira}

\begin{theorem}[\cite{ROliveira}, Theorem 1.1]\label{t:coal}
There exists a universal positive constant $\uc{c_coal}$ such that the coalescence time $\mathcal{C}^{*}$ of continuous-time simple random walk in a finite connected graph $G$ satisfies
\begin{equation}
    \E[\mathcal{C}^{*}] \leq \uc{c_coal} T_{\textnormal{hit}},
\end{equation}
where $T_{\textnormal{hit}}$ is the maximum expected hitting time of $G$ across all possible starting positions and target vertices. 
\end{theorem}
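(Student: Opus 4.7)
Since the theorem is cited from \cite{ROliveira}, the plan is to outline Oliveira's strategy rather than attempt a genuinely independent derivation. The statement factors naturally as
\[
\E[\mathcal{C}^*] \leq C \cdot t_{\textnormal{meet}} \leq C' \cdot T_{\textnormal{hit}},
\]
where $t_{\textnormal{meet}} = \max_{x,y} \E[M(x,y)]$ is the worst-case expected meeting time of two independent continuous-time random walks. The second inequality is classical; the first is the substantial content of \cite{ROliveira}.

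The classical step is that $t_{\textnormal{meet}} \leq C_1 T_{\textnormal{hit}}$. I would view the pair $(X_t, Y_t)$ as a single continuous-time Markov chain on $V \times V$ (the two walks jumping independently); meeting corresponds to hitting the diagonal. A coupling argument — freeze $Y$ temporarily and let $X$ hit $Y$'s current location, or alternatively run the ``difference walk'' — shows the hitting time of the diagonal is $O(T_{\textnormal{hit}})$.

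The heart of the proof is Oliveira's step: showing that the full coalescence of $n$ particles takes only $O(t_{\textnormal{meet}})$ in expectation, without the logarithmic factor that naive pair-by-pair arguments produce. Let $N_t$ denote the number of distinct surviving particles at time $t$ in the coalescing system started from every vertex. The plan would be to establish the ``$1/t$-type'' decay
\[
\E[N_t - 1] \leq \frac{C_2 \, t_{\textnormal{meet}}}{t}, \quad t > 0.
\]
The main obstacle is precisely this estimate. The approach is a second-moment bound: control $\E[N_t(N_t-1)]$ by summing over ordered pairs $(x,y) \in V^2$ the probability that the particles starting at $x$ and $y$ have not yet coalesced by time $t$, using the meeting-time estimate to bound each such probability. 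A self-improving inequality — exploiting that non-coalesced pairs at time $t$ are themselves essentially independent pairs of random walks that have failed to meet — then converts this second-moment control into the desired first-moment decay.

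With the $1/t$ decay in hand, the conclusion is a standard Markov iteration. Since $N_t \geq 1$ deterministically and $\{\mathcal{C}^* > t\} = \{N_t \geq 2\}$, Markov's inequality yields $\P(\mathcal{C}^* > t) \leq \E[N_t - 1] \leq C_2 t_{\textnormal{meet}}/t$. Choosing $t_0 = 2 C_2 \, t_{\textnormal{meet}}$ gives $\P(\mathcal{C}^* > t_0) \leq 1/2$, and applying the Markov property at time $k t_0$ to the residual coalescing system gives $\P(\mathcal{C}^* > k t_0) \leq 2^{-k}$. Summing the geometric tail bound yields $\E[\mathcal{C}^*] \leq 2 t_0 = O(t_{\textnormal{meet}}) = O(T_{\textnormal{hit}})$, as required.
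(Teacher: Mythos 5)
The theorem in question is cited by the paper verbatim as Theorem~1.1 of \cite{ROliveira}; the paper offers no proof of it, so there is nothing internal to compare your attempt against. Your sketch is a fair high-level account of Oliveira's strategy: factor through the worst-case meeting time $t_{\textnormal{meet}}$, then prove a $t_{\textnormal{meet}}/t$ decay of the expected number of surviving particles so as to avoid the logarithmic loss a naive pairwise union bound would incur, and finish with a Markov iteration to get an exponential tail (which is, incidentally, exactly the mechanism the present paper uses downstream in the proof of Lemma~\ref{lemma:coal}).

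Two points in your sketch are glossed over more than they should be. First, the ``freeze $Y$ and let $X$ hit its location'' coupling does not bound $t_{\textnormal{meet}}$ by $T_{\textnormal{hit}}$, because $Y$ keeps moving; the comparison of meeting time to hitting time for reversible chains genuinely requires the more delicate argument in Oliveira (or Aldous--Fill). Second, establishing the $1/t$ decay of $\E[N_t-1]$ is the technical heart of the whole paper, and summarizing it as ``a second-moment bound plus a self-improving inequality'' understates the difficulty: the non-coalesced pairs at time $t$ are correlated and not at stationarity, and the real work is showing they nonetheless keep meeting at a rate comparable to $1/t_{\textnormal{meet}}$. Since the statement is a black-box citation, these are presentational rather than logical gaps, but a reader taking your sketch at face value would badly underestimate what goes into the cited theorem.
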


Finally, Theorem 11.5 of~\cite{LevinPeresWilmer2006} (originally proven in~\cite{inproceedings}) yields
\begin{equation}\label{t:bound_thit}
    T_{\textnormal{hit}} \leq 2(n-1)|E_{n}|
\end{equation}
for the discrete-time simple random walk. Since the continuous time random walk jumps at rate at least 1, and thus the same bound holds in continuous time as well.

\subsection{The interchange process}

Let us now introduce the first noise operator, which will be applied to the discrete-time voter model. This noise will alter the order of the edge selections in the sequence $\omega$ via the so-called \textbf{interchange process} that we define now.

The interchange process in $\N$ is the interacting particle system with state space $\N^{\N}$, initial state $\zeta_{0}(k)=k$, for every $k \in \N$, and generator given by
\begin{equation}\label{eq:interchange_generator}
    \mathcal{L}{\mathfrak{f}}(\zeta) = \frac{1}{2}\sum_{k=1}^{\infty} \mathfrak{f} (\zeta^{k,k+1})-\mathfrak{f} (\zeta),
\end{equation}
where $\mathfrak{f}$ is any local function of the configurations and $\zeta^{k,k+1}$ is obtained from $\zeta$ via
\begin{equation}
    \zeta^{k,k+1}(z) = \begin{cases}
        \zeta(k+1), & \quad \text{if } z=k, \\
        \zeta(k), & \quad \text{if } z=k+1, \\
        \zeta(z), & \quad \text{otherwise}.
    \end{cases}
\end{equation}

We denote by $\zeta_{t}$ the state of the interchange process at time $t$. Equivalently, the interchange process is the process where each positive integer site $k$ starts at state $k$ and nearest-neighbor states are interchanged independently with rate one. From this definition, it follows that for any positive time $t$, $\zeta_{t}$ defines a permutation of $\N$.

We remark that the interchange process can be seen as a generalization of the exclusion process. In fact, given any initial condition $\Xi_{0} \in \{0,1\}^{\N}$ of the exclusion process, we can obtain the configuration at time $t$ as $\Xi_{t} = \Xi_{0} \circ \zeta_{t}$. We also remark that this process can be defined analogously for finite intervals of $\N$ (this will be convenient in later sections).

Given $t>0$, the noised configuration $\omega_{t} = \big( \omega_{t}(k) \big)_{k \geq 1}$ is obtained from $\omega$ (shortened notation of $\omega_{0}$) and $\zeta_{t}$ via
\begin{equation}
    \omega_{t}(k) = \omega\big(\zeta_{t}(k)\big).
\end{equation}

Observe that the sequence $\omega_{t}$ includes the same edges as $\omega$, but in a different order determined by $\zeta_{t}$. The larger $t$ is, the noisier is the configuration $\omega_{t}$ with respect to $\omega$, since more transpositions are applied to construct $\zeta_t$. Notice that if $\omega$ is a sequence of i.i.d.\ directed edges, then so is $\omega_{t}$, although $\omega$ and $\omega_{t}$ are not independent from each other.

\section{Exclusion Stability for the Voter model}\label{stabsec}

In this section we prove Theorem~\ref{stabtheorem}. The proof is based on three preliminary results. The following lemma states that a lazy random walk on $[0,N]$ starting at $k$ makes on average $k(N-k)$ non-zero steps before hitting $\{0,N\}$, even if the lazy parameter can change arbitrarily. 
\begin{lemma}\label{rwlemma}
Let $(p_{n})_{n \in \N}$ and $(X_{n})_{n \in \N}$ be two processes adapted to a given filtration $(\mathcal{F}_{n})_{n \in \N}$. Assume that these processes take values in $[0,1]$ and $\N$, respectively, that $X_{0} = k \in [0,N]$, and
\begin{equation}\label{randomwalkevolve}
\begin{split}
\P\big( X_{n+1} = X_{n} + 1 \big| & \mathcal{F}_{n} \big) = \P\big( X_{n+1} = X_{n} - 1 \big| \mathcal{F}_{n} \big) = \frac{p_{n}}{2} \\
\P\big( X_{n+1} = X_{n} \big| & \mathcal{F}_{n} \big) = 1-p_{n}.
\end{split}
\end{equation}
Let $\tau$ denote the hitting time of the set $\{0,N\}$ by the process $(X_{n})_{n \in \N}$. If $\E[\tau] < \infty$, then
\begin{equation}
    \E \bigg[ \sum_{i=0}^{\tau-1} p_{i} \bigg] = k(N-k).
\end{equation}
Furthermore, if there exists $\delta_{N} > 0$ such that $p_{n} \geq \delta_{N}$ whenever $X_{n} \in \{1,..., N-1\}$, then $\E[\tau]< \infty$.
\end{lemma}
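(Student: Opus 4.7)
The plan is to prove the identity via an optional stopping argument for a suitably chosen martingale, then establish the integrability claim by a direct comparison/coupling bound showing that $\tau$ has geometric tail.

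For the main identity, I would consider the function $g(x) = x(N-x)$. Its discrete Laplacian $g(x+1)+g(x-1)-2g(x)$ equals $-2$ for every $x$, so the transition rule \eqref{randomwalkevolve} gives
\begin{equation}
\E\bigl[g(X_{n+1}) \,\big|\, \mathcal{F}_n\bigr] = g(X_n) + \frac{p_n}{2}\bigl(g(X_n+1)+g(X_n-1)-2g(X_n)\bigr) = g(X_n) - p_n.
\end{equation}
Thus $M_n := g(X_n) + \sum_{i=0}^{n-1} p_i$ is an $(\mathcal{F}_n)$-martingale with $M_0 = k(N-k)$. Before the stopping time $\tau$, the walk stays in $[0,N]$, so $0 \leq g(X_{n\wedge \tau}) \leq N^2/4$, while $\sum_{i=0}^{(n\wedge\tau)-1} p_i \leq n\wedge\tau \leq \tau$. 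Under the assumption $\E[\tau]<\infty$, the stopped martingale $M_{n\wedge\tau}$ is dominated by the integrable random variable $N^2/4 + \tau$, so dominated convergence together with the martingale property gives $\E[M_\tau] = \E[M_0] = k(N-k)$. Since $X_\tau \in \{0,N\}$ forces $g(X_\tau) = 0$, the identity $\E\bigl[\sum_{i=0}^{\tau-1} p_i\bigr] = k(N-k)$ follows.

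For the integrability statement, I would use a standard reset argument. Suppose $p_n \geq \delta_N$ whenever $X_n \in \{1,\dots,N-1\}$. Conditioned on $\mathcal{F}_n$ with $X_n \in \{1,\dots,N-1\}$, the probability that the walk makes $N$ consecutive $+1$ steps in times $n, n+1, \dots, n+N-1$ is at least $(\delta_N/2)^N$, in which case $X_{n+j}$ reaches $N$ no later than the $(n+N)$-th step (before reaching $N$ the walk is still in the relevant range, so the lower bound $\delta_N$ on $p_{n+j}$ applies at each step). Setting $q := (\delta_N/2)^N$, an iteration over the windows $[jN,(j+1)N)$ yields $\P(\tau > jN) \leq (1-q)^j$, giving a geometric tail and hence $\E[\tau] < \infty$.

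The only subtle point in this plan is the justification of optional stopping, but once one observes that $\sum_{i=0}^{(n\wedge\tau)-1} p_i \le \tau$ serves as an integrable dominator, everything is routine. I do not anticipate genuine obstacles; the combinatorics of pivotal flips will appear later, not here.
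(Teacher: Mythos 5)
Your proof is correct. Both you and the paper argue by optional stopping for a martingale built from a quadratic in $X_n$ together with the running sum $\sum_{i<n}p_i$, but the martingales chosen differ. The paper takes $M_n = X_n^2 - \sum_{i<n}p_i$, which after stopping gives $\E[X_\tau^2] - \E\big[\sum_{i<\tau}p_i\big] = k^2$, and must then invoke the gambler's-ruin hitting probabilities $\P(X_\tau = N) = k/N$ (which is itself an implicit second optional-stopping application, this time to the martingale $X_n$) to extract $\E[\sum p_i] = k(N-k)$. You take $M_n = X_n(N-X_n) + \sum_{i<n}p_i$, whose deterministic part vanishes at $X_\tau \in \{0,N\}$, so the identity falls out of a single optional-stopping application with no need to compute hitting probabilities. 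The two martingales differ by the martingale $N X_n$, so the arguments are algebraically equivalent, but your choice is marginally more economical. The integrability argument for the second part is essentially the same in both proofs (positive probability of absorption in each length-$N$ window, hence a geometric tail for $\tau/N$); note that you correctly write the bound $(\delta_N/2)^N$ for $N$ consecutive $+1$ steps, where the paper has the harmless slips $\delta_N^N$ and a reversed inequality in the conditional bound, neither of which affects the conclusion.
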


\begin{proof}
The first part of the lemma is a consequence of the following martingale argument. Define the martingale $(M_{n})_{n \in \N}$ by setting $M_{0}:=k^{2}$, and define for $n\geq 1$
\begin{equation}
M_{n}:= X_{n}^{2} - \sum_{i=0}^{n-1}p_{i}.
\end{equation}
Since $E[\tau]< \infty,$ and $\E\big[ \left|M_{n+1}-M_{n}\right| \big| \mathcal{F}_{n}\big]$ is almost surely bounded on the event $\{\tau > n\}$, the Optional Stopping Theorem yields
\begin{equation}
\E[M_{\tau}] = \E[M_{0}] = k^{2}.
\end{equation}

Applying this fact, we get that
\begin{equation}
\frac{k}{N}\bigg( N^{2} - \E \bigg[ \sum_{i=0}^{\tau-1} p_{i} \bigg] \bigg) + \bigg(1-\frac{k}{N}\bigg) \bigg( 0 - \E\bigg[ \sum_{i=0}^{\tau-1} p_{i} \bigg] \bigg) = k^{2},
\end{equation}
implying the desired result.

For the second part, the uniform lower bound on $p_n$ implies that for every $j\geq0$, \linebreak $\P(\tau\leq(j+1)N \, | \, \tau > jN) \leq 1-\delta_N^N$. Therefore, $\P(\tau>jN)\leq \big(1-\delta_N^N)^j$, and thus $\E[\tau]<\infty$ and, in fact, $\tau/N$ are stochastically dominated by a geometric random variable.
\end{proof}

Consider the (random) collection of pivotal swaps for the configuration $\omega_{s}$, defined as
\begin{equation}
    \text{Piv}_{s}:= \{ \ell \in \N: \text{the transition } \ell \leftrightarrow \ell+1 \text{ is pivotal for } \omega_{s} \}
\end{equation}
The next lemma claims that the expected size of this set provides a bound on the probability that the noise alters the consensus opinion.
\begin{lemma}\label{l_pivotal_bound}
    We have
\begin{equation}\label{eq:piv_integral}
\P\big( f_{n}(\omega) \neq f_{n}(\omega_{t}) \big) \leq \int_{0}^{t} \E[|\textnormal{Piv}_{s}|]ds = t\E[|\textnormal{Piv}_{0}|].
\end{equation}
\end{lemma}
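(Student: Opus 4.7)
The plan is to bound $\P(f_n(\omega) \neq f_n(\omega_t))$ by the expected number of pivotal swaps occurring in $[0,t]$, and then use the distributional invariance $\omega_s \stackrel{d}{=} \omega_0$ to conclude the equality with $t\,\E[|\textnormal{Piv}_0|]$.

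First I would observe that $s \mapsto \omega_s$ is piecewise constant in $s$ and jumps exactly at the swap times of the interchange process $\zeta$. If a swap at time $s$ exchanges positions $\ell$ and $\ell+1$, then by Definition~\ref{def_pivotality} applied with $i=\ell$, $j=\ell+1$, we have $f_n(\omega_{s^-}) \neq f_n(\omega_s)$ if and only if $\ell \in \textnormal{Piv}_{s^-}$. Consequently, if $f_n(\omega_0) \neq f_n(\omega_t)$, then at least one such pivotal swap must have occurred in $[0,t]$. Writing $N_t$ for the number of pivotal swaps in $[0,t]$, Markov's inequality gives
\begin{equation*}
\P\bigl(f_n(\omega) \neq f_n(\omega_t)\bigr) \leq \P(N_t \geq 1) \leq \E[N_t].
\end{equation*}

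Next, $N_t$ is built from the superposition of independent rate-one Poisson clocks, one per adjacent pair, and a swap at position $\ell$ at time $s$ is counted precisely when $\ell \in \textnormal{Piv}_{s^-}$. The standard compensator formula for marked point processes yields $\E[N_t] = \E\int_0^t |\textnormal{Piv}_{s^-}|\,ds$. Since swap times form a Lebesgue-null set we have $\textnormal{Piv}_{s^-} = \textnormal{Piv}_s$ for a.e.\ $s$, and Fubini is applicable because $|\textnormal{Piv}_s|$ is dominated by (essentially) the coalescence time $\mathcal{C}(\theta)$---swaps strictly beyond this horizon cannot change the consensus and so are never pivotal---which is a.s.\ finite and has finite mean by Theorem~\ref{t:coal} combined with~\eqref{t:bound_thit}. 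This yields $\E[N_t] = \int_0^t \E[|\textnormal{Piv}_s|]\,ds$.

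Finally, for the identity $\int_0^t \E[|\textnormal{Piv}_s|]\,ds = t\,\E[|\textnormal{Piv}_0|]$, I would invoke the fact that $\omega_s \stackrel{d}{=} \omega_0$ for every $s \geq 0$: writing $\omega_s = \omega \circ \zeta_s$ with $\zeta_s$ a random permutation of $\N$ independent of the i.i.d.\ sequence $\omega$, the sequence $\omega \circ \zeta_s$ has the same finite-dimensional distributions as $\omega$, and since $\textnormal{Piv}_s$ is a measurable functional of $\omega_s$ alone, this forces $\E[|\textnormal{Piv}_s|] = \E[|\textnormal{Piv}_0|]$ to be constant in $s$. The main technical point is the compensator computation in the second step together with the integrability of $|\textnormal{Piv}_s|$; everything else is essentially bookkeeping.
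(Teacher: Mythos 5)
Your proposal is correct and takes essentially the same route as the paper: bound by $\P(N_t\ge 1)\le\E[N_t]$, identify $|\textnormal{Piv}_s|$ as the (predictable) intensity of the pivotal-swap counting process, and invoke stationarity $\omega_s \stackrel{d}{=} \omega_0$ to turn the integral into $t\,\E[|\textnormal{Piv}_0|]$. One small remark: the Fubini/integrability digression about $|\textnormal{Piv}_s|$ being ``dominated by the coalescence time'' is not needed and is stated somewhat loosely---if $\E[|\textnormal{Piv}_s|]=\infty$ the claimed inequality is vacuously true, and the final equality still holds with both sides infinite, so Tonelli for nonnegative integrands already suffices. (The actual finiteness of $\E[|\textnormal{Piv}_0|]$ is established afterward in Lemma~\ref{lemma_expected_piv}.)
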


\begin{proof}
In order to verify the bound, denote by $\big( N_{t} \big)_{t \geq 0}$ the process that counts the number of pivotal flips. The key observation is that this is an inhomogeneous Poisson process with random adapted intensity rate given by $r_t = | \text{Piv}_{t}|$. In particular,
\begin{equation}
\P\big( f_{n}(\omega) \neq f_{n}(\omega_{t}) \big) \leq \P \big (N_{t} \geq 1 \big) \leq \E[ N_{t} ] = \int_{0}^{t} \E[|\text{Piv}_{s}|]ds.
\end{equation}

Since $\omega_{s} \sim \omega_{0}$, for all $s \geq 0$, the distribution of the set $\text{Piv}_{s}$ does not depend on $s$, and thus
\begin{equation}\label{eq:pivotal_integral}
\int_{0}^{t} \E[|\text{Piv}_{s}|]ds = t \E[|\text{Piv}_{0}|],
\end{equation}
concluding the proof of the lemma.
\end{proof}

Finally, the third auxiliary result gives a bound on the expected size of the pivotal set.
\begin{lemma}\label{lemma_expected_piv}
We have
\begin{equation}
     \E[|\textnormal{Piv}_{0}|] = \sum_{\ell=1}^{\infty}\P(f(\omega) \neq f(\omega_{\ell \leftrightarrow \ell+1})) \leq \frac{3n \Delta_{n}}{|E_{n}|}.
\end{equation}
\end{lemma}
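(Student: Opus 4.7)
The strategy is to express the probability of a pivotal swap at position $\ell$ through the expected size of the two-point defect set $S_\ell := \{v : \eta_{\ell+1}(v) \neq \tilde\eta_{\ell+1}(v)\}$ (where $\tilde\eta_{\ell+1}$ is obtained by applying the swapped pair of updates to $\eta_{\ell-1}$), and then to sum the resulting bound using the martingale estimate from Lemma~\ref{rwlemma}.

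Fix $\ell \geq 1$ and write $\omega(\ell) = (x_1, y_1)$, $\omega(\ell+1) = (x_2, y_2)$. The original and swapped processes agree on $\eta_0, \ldots, \eta_{\ell-1}$ and both continue from step $\ell+2$ on with the common tail $\omega^{\ell+1}$; by the definition of $\text{Dict}$ in~\eqref{probdict}, this gives $f_n(\eta_0,\omega) = \eta_{\ell+1}(D^*)$ and $f_n(\eta_0,\omega_{\ell \leftrightarrow \ell+1}) = \tilde\eta_{\ell+1}(D^*)$ with $D^* := \text{Dict}(\omega^{\ell+1})$. Since $D^*$ is a function of $\omega^{\ell+1}$ alone, it is independent of the triple $(\eta_{\ell-1}, \omega(\ell), \omega(\ell+1))$; combined with $\P(D^*=v)=1/n$ from~\eqref{probdict},
\[
\P\bigl(f_n(\eta_0,\omega) \neq f_n(\eta_0,\omega_{\ell \leftrightarrow \ell+1})\bigr) \;=\; \P(D^* \in S_\ell) \;=\; \frac{\E[|S_\ell|]}{n}.
\]

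A short case analysis of $S_\ell$ follows. Only $y_1$ and $y_2$ can lie in $S_\ell$, so $|S_\ell| \leq 2$, and $S_\ell \neq \emptyset$ forces $\omega(\ell), \omega(\ell+1)$ to share a vertex in one of three configurations: (A)~$y_1 = y_2$; (B)~$y_1 = x_2$ with $y_1 \neq y_2$; (C)~$y_2 = x_1$ with $y_1 \neq y_2$. In~(B) one has $S_\ell \subseteq \{y_2\}$, with $y_2 \in S_\ell$ iff the edge $\{x_1,y_1\}$ is discordant for $\eta_{\ell-1}$; symmetrically in~(C) one needs $\{x_2,y_2\}$ discordant; in~(A), $y_1 \in S_\ell$ iff $\eta_{\ell-1}(x_1) \neq \eta_{\ell-1}(x_2)$, which, via $x_1 \sim y_1=y_2 \sim x_2$, forces one of $\{x_1,y_1\}$ or $\{y_1,x_2\}$ to be discordant. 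Let $D_{\ell-1}$ denote the number of discordant undirected edges at time $\ell-1$. Using independence of $(\omega(\ell),\omega(\ell+1))$ from $\eta_{\ell-1}$ and the bound $\deg(u) \leq \Delta_n$, a direct count of directed edges yields
\[
\E\bigl[|S_\ell| \,\big|\, \eta_{\ell-1}\bigr] \;\leq\; \frac{2 \Delta_n D_{\ell-1}}{|E_n|^2}.
\]

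To conclude, apply Lemma~\ref{rwlemma} to $R_\ell := \sum_v \eta_\ell(v)$, a $\{0,\ldots,n\}$-valued martingale that moves by $\pm 1$ with common probability $D_\ell/(2|E_n|)$ and otherwise stays put. Connectedness of $G_n$ gives $D_\ell \geq 1$ until consensus, so the finiteness hypothesis is met and the lemma yields $\E\bigl[\sum_{i \geq 0} D_i\bigr] = |E_n|\, \E[R_0(n-R_0)] \leq n^2|E_n|/4$. Combining with the preceding displays,
\[
\E[|\text{Piv}_0|] \;=\; \frac{1}{n}\sum_{\ell \geq 1} \E[|S_\ell|] \;\leq\; \frac{2\Delta_n}{n|E_n|^2}\cdot \frac{n^2|E_n|}{4} \;=\; \frac{n\Delta_n}{2|E_n|},
\]
which is well within the announced $3n\Delta_n/|E_n|$. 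The delicate point is the case analysis above: the structural observation to verify is that in every swap configuration producing a defect, the defect can be ``charged'' to a genuinely discordant edge at time $\ell-1$, so that $\sum_\ell \E[D_{\ell-1}]$ --- rather than a larger quantity --- is exactly what Lemma~\ref{rwlemma} is set up to control.
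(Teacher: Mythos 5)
Your proof is correct and follows essentially the same route as the paper's: express pivotality of a single swap as the event that the (uniformly distributed, independent) dictator lands in the small defect set $S_\ell$ created by the swap, observe that a nonempty defect forces the two edges to overlap and to sit on a discordant edge, and then control $\sum_\ell \E[D_{\ell-1}]$ with the martingale estimate of Lemma~\ref{rwlemma}. The defect-set bookkeeping is a slightly cleaner repackaging of the paper's sum of indicators and yields a somewhat better constant, but the underlying decomposition, the use of~\eqref{probdict}, and the reduction to discordant edges are the same.

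One small slip worth flagging: your cases (B) and (C) are not disjoint --- the reverse-edge configuration $\omega(\ell)=(u,v)$, $\omega(\ell+1)=(v,u)$ satisfies both --- and in that configuration $S_\ell=\{u,v\}$ (so $|S_\ell|=2$), contradicting the stated $S_\ell\subseteq\{y_2\}$. This does not invalidate the bound: the reverse-edge pairs contribute at most $4D_{\ell-1}$ to $\sum_{\vv{e_1},\vv{e_2}}|S_\ell|$, which together with cases (A), (B)$\setminus$(C), and (C)$\setminus$(B) still totals $\leq (8\Delta_n-4)D_{\ell-1}$, giving $\E[|S_\ell|\,|\,\eta_{\ell-1}]\leq 2\Delta_n D_{\ell-1}/|E_n|^2$ as you claim. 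It would be cleaner to either make the three cases disjoint and add the reverse-edge case explicitly, or to phrase the charging argument as ``each $w\in S_\ell$ can be charged to at least one discordant edge incident to the shared vertex, each such edge is charged at most $O(\Delta_n)$ times per ordered pair of choices,'' which sidesteps the overlap.
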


\begin{proof}
In the following, we drop the subscript and simply write $\omega = \omega_{0}$ for the sequence of initial edge selections. If the transition $\ell \leftrightarrow \ell+1$ is pivotal for $\omega$, then the edges $\omega(\ell)$ and $\omega(\ell+1)$ necessarily share a common vertex, since otherwise the changes in opinion induced by them commute. 
This is also the case if $\omega(\ell),\omega(\ell+1)$ share a starting point.
Figure \ref{fig:edges} illustrates all the combinatorial possibilities for a pivotal event.

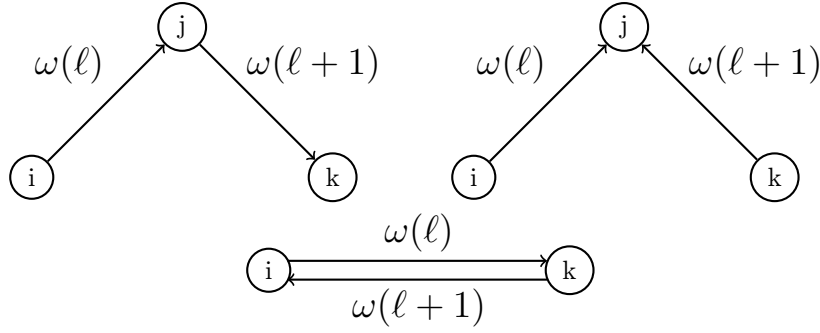
\begin{figure}
\begin{center}
\begin{tikzpicture}[node distance={15mm}, thick, main/.style = {draw, circle}] 
  \node at (2.5,9.5) {\Large $\omega(\ell)$};
   \node at (5.75,9.5) {\Large $\omega(\ell+1)$};
  \node[main] (n6) at (2,8) {i};
  \node[main] (n4) at (4,10)  {j};
  \node[main] (n5) at (6,8)  {k};
\draw[->] (n6) -- (n4); 
\draw[->] (n4) -- (n5); 
\end{tikzpicture}
\qquad 
\begin{tikzpicture}[node distance={15mm}, thick, main/.style = {draw, circle}] 
  \node at (2.5,9.5) {\Large $\omega(\ell)$};
   \node at (5.75,9.5) {\Large $\omega(\ell+1)$};
  \node[main] (n6) at (2,8) {i};
  \node[main] (n4) at (4,10)  {j};
  \node[main] (n5) at (6,8)  {k};
\draw[->] (n6) -- (n4); 
\draw[->] (n5) -- (n4); 
\end{tikzpicture}
\qquad
\begin{tikzpicture}[node distance={15mm}, thick, main/.style = {draw, circle}] 
  \node at (4,8.5) {\Large $\omega(\ell)$};
   \node at (4,7.5) {\Large $\omega(\ell+1)$};
  \node[main] (n6) at (2,8) {i};
  \node[main] (n5) at (6,8)  {k};
 \draw[->] (2.25, 8.125) -- (5.70, 8.125);
  \draw[->] (5.70, 7.875) -- (2.25, 7.875);
\end{tikzpicture}
\end{center}
\caption{The three ways in which two edges can overlap if the induced opinion changes do not commute.}
\label{fig:edges}
\end{figure}

We first deal with the left image and estimate its contribution to $\E[|\text{Piv}_{0}|]$. 
Here, the $\ell$-th edge $\omega(\ell)$ is the directed edge $\vv{ij},$ and $\omega(\ell+1)$, is the directed edge $\vv{jk}$. 
In order for this exchange to be pivotal, it is necessary and sufficient that vertices $i$ and $j$ have different opinions before the $\ell$-th edge selection, that is $\eta_{\ell-1}(i) \neq \eta_{\ell-1}(j)$, and that vertex $k$ is the dictator based on the subsequent dynamics after the $\omega(\ell+1)$ activity, i.e. the consensus opinion satisfies $\text{Dict}(\omega^{\ell+1})=k$. 
Hence, the contribution from such pairs of edges to the expected pivotal set is given by 
\begin{equation}\label{eq: comb}
\E\bigg[\sum_{\ell=1}^{\infty} \sum_{\vv{ij} \in \vv{E_{n}}} \sum_{\vv{jk} \in \vv{E_{n}}}\mathbbm{1}_{\omega(\ell)=\vv{ij}}\cdot\mathbbm{1}_{\omega(\ell+1)=\vv{jk}} \cdot\mathbbm{1}_{\eta_{\ell-1}(i) \neq \eta_{\ell-1}(j)}\cdot\mathbbm{1}_{\text{Dict}(\omega^{\ell+1})=k}\bigg].
\end{equation}
 Let us now consider the top right case, where both edges point toward the same vertex $j$. In this case, it is necessary and sufficient that vertex $j$ is the dictator from the $(\ell+1)$-th edge onward, and vertices $i$ and $k$ have disagreeing opinions. Hence, the contribution of this term to the expected value is given by
\begin{equation}
\E\bigg[\sum_{\ell=1}^{\infty} \sum_{\vv{ij} \in \vv{E_{n}}} \sum_{\vv{kj} \in \vv{E_{n}}}\mathbbm{1}_{\omega(\ell)=\vv{ij}}\cdot\mathbbm{1}_{\omega(\ell+1)=\vv{kj}} \cdot\mathbbm{1}_{\eta_{\ell-1}(i) \neq \eta_{\ell-1}(k)}\cdot\mathbbm{1}_{\text{Dict}(\omega^{\ell+1})=j}\bigg].
\end{equation}
Observe that if $\mathbbm{1}_{\eta_{\ell-1}(i) \neq \eta_{\ell-1}(k)}$ it must be the case that either one of $\mathbbm{1}_{\eta_{\ell-1}(i) \neq \eta_{\ell-1}(j)}$ or $\mathbbm{1}_{\eta_{\ell-1}(k) \neq \eta_{\ell-1}(j)}$ holds. 
By symmetry these have the same probability, and so the expectation above equals
\begin{equation}
2\cdot\E\bigg[\sum_{\ell=1}^{\infty} \sum_{\vv{ij} \in \vv{E_{n}}} \sum_{\vv{kj} \in \vv{E_{n}}}\mathbbm{1}_{\omega(\ell)=\vv{ij}}\cdot\mathbbm{1}_{\omega(\ell+1)=\vv{kj}} \cdot\mathbbm{1}_{\eta_{\ell-1}(i) \neq \eta_{\ell-1}(j)}\cdot\mathbbm{1}_{\text{Dict}(\omega^{\ell+1})=j}\bigg].
\end{equation}
Since an edge appears in each direction with the same probability, this is exactly twice the value  of \eqref{eq: comb}.
Finally, the bottom configuration can be seen as a special case of the first one, by allowing $k=j$.
Combined, the considerations above imply that
\begin{equation}\label{eq:pivolta_zero}
    \E[|\text{Piv}_{0}|] \leq 3\cdot\E\bigg[\sum_{\ell=1}^{\infty} \sum_{\vv{ij} \in \vv{E_{n}}} \sum_{k: \vv{jk} \in \vv{E_{n}}}\mathbbm{1}_{\omega(\ell)=\vv{ij}}\cdot\mathbbm{1}_{\omega(\ell+1)=\vv{jk}} \cdot\mathbbm{1}_{\eta_{\ell-1}(i) \neq \eta_{\ell-1}(j)}\cdot\mathbbm{1}_{\text{Dict}(\omega^{\ell+1})=k}\bigg].
\end{equation}

Observe now that for any fixed $i,j,k,\ell$, the indicators in \eqref{eq:pivolta_zero} are independent. 
This is due to the fact that $\{\eta_{\ell-1}(i) \neq \eta_{\ell-1}(j)\}$ depends on the edge selections up to $\omega(\ell-1),$ the event $\{\text{Dict}(\omega^{\ell+1})=k\}$ depends on the edge selections from $\omega(\ell+2)$, and the first two events, naturally, only depend on $\omega(\ell)$, and $\omega(\ell+1)$, respectively. 
As discussed in Section \ref{prelim}, for every $\ell$, $\text{Dict}(\omega^{\ell+1})$ is a uniform vertex. Furthermore, $\P(\omega(\ell)=\vv{ij}) = \frac{1}{2|E_{n}|}$. This yields
\begin{equation}\label{expctpiv}
\begin{split}
\E[|\text{Piv}_{0}|]& \leq \frac{3}{n
|E_{n}|^{2}}\cdot\E\bigg[\sum_{\ell=1}^{\infty} \sum_{\vv{ij} \in \vv{E}_{n}} \sum_{k: \vv{jk} \in \vv{E_{n}}}\mathbbm{1}_{\eta_{\ell-1}(i) \neq \eta_{\ell-1}(j)}\bigg]\\
&\leq \frac{3\cdot\Delta_{n}}{n
|E_{n}|^{2}}\cdot\E\bigg[\sum_{\ell=1}^{\infty} \sum_{\vv{ij} \in \vv{E}_{n}} \mathbbm{1}_{\eta_{\ell-1}(i) \neq \eta_{\ell-1}(j)}\bigg].
\end{split}
\end{equation}

To estimate this sum, define for $\ell \geq 0$,
\begin{equation}\label{eq:p}
p_{\ell} :=\frac{1}{2|E_{n}|}\sum_{\vv{ij}\in \vv{E_{n}}} \mathbbm{1}_{\eta_{\ell}(i) \neq \eta_{\ell}(j)},
\end{equation}
so that \eqref{expctpiv} can be rewritten as
\begin{equation}\label{eq:pivotal_bound}
\E[|\text{Piv}_{0}|] \leq \frac{6\cdot\Delta_{n}}{
n |E_{n}|}\cdot\E\bigg[\sum_{\ell=0}^{\infty} p_{\ell}\bigg],
\end{equation}
To conclude the proof, we claim that, for any initial configuration $\eta_{0}$,
\begin{equation}\label{finitestimate}
    \E\bigg[\sum_{\ell=0}^{\infty} p_{\ell}\bigg] \leq \frac{n^{2}}{4},
\end{equation}
This is a consequence of Lemma \ref{rwlemma}. Define the process $X_{\ell}$ to be the number of vertices with opinion $1$ and $p_{\ell}$ as in \eqref{eq:p} above, both processes are adapted to the filtration $\mathcal{F}_{\ell} := \sigma(\omega(i):i\leq\ell)$. 
Moreover, $X_{\ell} \in [0,n],$ and the process $(X_{\ell})_{\ell\in \N}$ is a $(1-p_{\ell})-$lazy discrete-time random walk, since each edge $\vv{ij}$ is picked with the same probability.
Let $\tau$ denote the hitting time of the set $\{0,n\}$ by this process. Observe that if $X_{\ell} \in \{1,...,n-1\},$ then $p_{\ell} \geq \frac{1}{2|E_{n}|}$ while $p_{\ell}=0$ for $\ell > \tau$ . Lemma \ref{rwlemma} implies~\eqref{finitestimate} which, when combined with~\eqref{eq:pivotal_bound} concludes the proof.
\end{proof}

Theorem \ref{stabtheorem} is now a direct consequence of the results above.

\begin{proof}[Proof of Theorem~\ref{stabtheorem} ]
Simply notice that Lemmas~\ref{l_pivotal_bound} and~\ref{lemma_expected_piv} imply
\begin{equation}\label{eq:pivotal_integral2}
\P\big( f_{n}(\omega) \neq f_{n}(\omega_{t}) \big) \leq t \E[|\text{Piv}_{0}|] \leq \frac{3 t n \Delta_{n}}{|E_{n}|},
\end{equation}
concluding the proof.
\end{proof}

\section{Brownian case}\label{sec: Brownian}

Let us start this section by properly introducing the notion of noise that we consider for the continuous-time dynamics.
In order to define the Brownian noise, we consider a decorated Poisson point process $\mathscr{P}$, defined on $\vec{E}_{n} \times \R_{+} \times C[0, +\infty)$ with intensity $\mu \otimes \lambda \otimes W$.
Here, $\mu$ is the counting measure on directed edges, $\lambda$ is the Lebesgue measure on $\R_{+}$, and $W$ denotes the Wiener measure on $C[0, +\infty)$.
Given $(w,t,B)\in \mathscr{P}$ we use the reflected Brownian motion $(|t+B_s|)_{s\geq0}$ as the activation times of the edge $w$.
Whenever we consider a realization $\mathscr{P} = \big(\omega(k), t(k), B(k)\big)_{k \in \N}$, we will always assume that this list is ordered in a way such that the sequence of times $t(k)$ is non-decreasing.

Note that there is a 0-measure set of times at which two edges are activated at the same time. If these edges are disjoint, the resulting opinions do not depend on the order in which the edges are activated. 
This will not have any impact, since this is a 0-measure set of times. 
There are a.s. no times at which three or more edges are activated at once.

Recall that $\theta$ denotes a Poisson point process on $\vec{E}_{n} \times \R_{+}$ with intensity measure $\mu \otimes \lambda$ which will be realized simply as the projection of $\mathscr{P} = (\omega(k), t(k), B(k))_{k \in \N}$ onto its first two coordinates.
Given a noise level $s>0$, define the noisy configuration $\theta_{s}$ as the one obtained from reordering the set $(\omega_{s}(k), |t(k)+B_{s}(k)|)_{k \in \N}$ so that the clock ticks $(|t(k)+B_{s}(k)|)_{k \in \N}$ are in non-decreasing order. This reordering is almost surely uniquely defined, as no two times are the same with probability one. Furthermore, since the Lebesgue measure is invariant with respect to the reflected Brownian motion, it follows that $\theta_{s} \sim \theta$.

For some time $\ell$ to be specified later, let $L$ be the number of time stamps in $[0,\ell]$.
Define the auxiliary sequence of  functions
\begin{equation}
    g_{n}(\xi_{0}, \theta) = f_{n}(\xi_{0},\omega) \mathbbm{1}_{\mathcal{C}(\theta) \leq L(\theta)},
\end{equation}
The advantage of the functions $g_{n}$ is that they depend only on the restriction of the Poisson point process $\theta$ to the interval $[0,\ell]$, a fact that will allow us to make use of counting arguments.
Since our result does not take into account the distribution of $\xi_{0}$, we drop the dependency of $g_{n}$ on these variables, as in the previous section.

In order to reduce our time horizon of the noise to a finite interval, we shall restrict our domains and noise to the time stamps contained in an interval $[0,m]$, where $m \geq \ell$ will be chosen later. More precisely, consider the process $\mathscr{P}$ restricted to $\vec{E}_{n} \times [0,m] \times C[0, +\infty)$.
Denote by $M = M(\theta)$ the number of points of $\theta = \big(\omega(k), t(k)\big)_{k \geq 1}$ such that $t(k) \in [0,m]$.
Given a realization $\theta[m] = \big(\omega(k), t(k)\big)_{k \leq M}$ of this process, we define the noised configuration $\theta_{s}[m]$ by running, for each time stamp $t(k)$, an independent length-$s$ Brownian motion on $[0,m]$, reflected at \emph{both} endpoints.
In other words, $\theta_{s}[m]$ is constructed from $\mathscr{P}$ via
\begin{equation}
    \theta_{s}[m] = (\omega_{s}(k), R_{m}(t(k)+B_{s}(k)))_{k \leq M},
\end{equation}
where, for $x\in \R$,
\begin{equation}\label{eq:reflection}
    R_{m}(x) = \begin{cases}
        x-2zm, & \text{if } x \in [2zm, (2z+1)m), \text{ for some } z \in \Z \\
        (2z+2)m-x, & \text{if } x \in [(2z+1)m, (2z+2)m), \text{ for some } z \in \Z.
    \end{cases}
\end{equation}

We first prove that this modified system is unlikely to differ from the original one in the interval $[0,\ell]$.
We say that $\theta_s=\theta_s[m]$ in $[0,\ell]$ if the set of edges and activation times contained in the interval $[0,\ell]$ is the same.
\begin{lemma}\label{l_modified_truncation}
Fix the graph $G_n$, $\ell>0$, and $s>0$. Then 
\begin{equation}\label{comparisonfg}
    \P \big( \theta_{s} \neq \theta_{s}[m] \text{ in the interval } [0,\ell] \big) \leq \frac{4\sqrt{s}|E_{n}|}{(m-\ell)} \Big( \frac{\sqrt{s}}{m-\ell}+\ell \Big) e^{-\frac{(m-\ell)^{2}}{8s}}.
\end{equation} 
\end{lemma}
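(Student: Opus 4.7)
The plan is to identify precisely which events can cause $\theta_s$ and $\theta_s[m]$ to disagree on $[0,\ell]$, and then bound the expected number of such events by combining the Mecke formula for the Poisson process of time stamps with Gaussian tail estimates for the displacements $B_s(k)$.

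The starting observation is that the double reflection $R_m$ agrees with $|\cdot|$ on $[-m,m]$. Consequently, for a stamp $(\omega(k),t(k))$ with $t(k)\in[0,m]$, the positions $|t(k)+B_s(k)|$ (entering $\theta_s$) and $R_m(t(k)+B_s(k))$ (entering $\theta_s[m]$) coincide as long as $t(k)+B_s(k)\in[-m,m]$. So disagreement in $[0,\ell]$ can only come from two types of bad stamps: \textbf{(A)} stamps with $t(k)\in[0,m]$ for which $t(k)+B_s(k)\notin[-m,m]$ and $R_m(t(k)+B_s(k))\in[0,\ell]$, which enter $\theta_s[m]\cap[0,\ell]$ but not $\theta_s\cap[0,\ell]$ (since $|t(k)+B_s(k)|>m>\ell$); and \textbf{(B)} stamps with $t(k)>m$, invisible to $\theta_s[m]$, for which $|t(k)+B_s(k)|\le\ell$. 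A union bound then gives $\P(\theta_s\neq\theta_s[m]\text{ on }[0,\ell])\le\E[\#A]+\E[\#B]$.

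The crucial geometric fact is that
\[
\bigl\{x\in\R:\, R_m(x)\in[0,\ell],\ |x|>m\bigr\}=\bigcup_{z\in\Z\setminus\{0\}}[2zm-\ell,2zm+\ell],
\]
a disjoint union of length-$2\ell$ intervals around nonzero even multiples of $m$. For $t\in[0,m]$ the interval closest to $t$ is $[2m-\ell,2m+\ell]$, at distance at least $m-\ell$; the others are geometrically further. Using the Mecke formula (the time stamps of $\theta$ form a Poisson process of rate $2|E_n|$ on $\R_+$) together with the Gaussian density for $B_s(k)\sim\mathcal N(0,s)$ gives
\[
\P\bigl(t+B_s\in[2m-\ell,2m+\ell]\bigr)\le\frac{2\ell}{\sqrt{2\pi s}}\,e^{-(2m-\ell-t)^2/(2s)}
\]
for $t\in[0,m]$, and $\P(|t+B_s|\le\ell)\le e^{-(t-\ell)^2/(2s)}$ for $t>m$. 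Summing the contributions from the different $z$ in a geometric series and applying the standard estimate $\int_a^{\infty}e^{-u^2/(2s)}\,du\le\tfrac{s}{a}e^{-a^2/(2s)}$ with $a=m-\ell$ leads to
\[
\E[\#A]\lesssim\frac{\sqrt{s}\,|E_n|\,\ell}{m-\ell}\,e^{-(m-\ell)^2/(2s)},\qquad\E[\#B]\lesssim\frac{s\,|E_n|}{m-\ell}\,e^{-(m-\ell)^2/(2s)}.
\]
Adding these two and absorbing constants (and, if convenient for the stated form, weakening the exponent from $(m-\ell)^2/(2s)$ to $(m-\ell)^2/(8s)$) yields the claimed bound.

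The main obstacle is the identification of the sharp sufficient event for type (A). The naive event $\{t(k)+B_s(k)\notin[-m,m]\}$ is far too generous: summed against the $2|E_n|\,dt$ intensity over $t\in[0,m]$ it has no small factor at all. One really must exploit that the noised position has to land specifically in $[0,\ell]$, which restricts $t(k)+B_s(k)$ to a union of short intervals of combined Lebesgue mass $O(\ell)$ near nonzero even multiples of $m$. This is what supplies both the extra factor $\ell$ in the second term and the sub-Gaussian factor $e^{-(m-\ell)^2/(2s)}$ needed for a genuinely small bound.
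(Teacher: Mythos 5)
Your proof is correct and follows essentially the same approach as the paper: the paper also decomposes the discrepancy into (i) stamps in $[0,m]$ whose Brownian endpoint exits $[-m,m]$ (its event $A$) and (ii) stamps beyond $m$ whose endpoint falls into $[0,\ell]$ (its event $B$), and then applies Gaussian tail estimates together with a first-moment union bound over particles exactly as you do. Your treatment of type (A) is slightly more refined---identifying the precise target set $\bigcup_{z\neq0}[2zm-\ell,2zm+\ell]$ of total length $O(\ell)$ per period---which produces the factor $\ell$ cleanly, whereas the paper reaches the same factor somewhat more casually (its bound $\P(A^c)\le |E_n|\,\ell\,\P(\sup_{t\le s}|B_t|\ge\tfrac12(m-\ell))$ would read $|E_n|\,m$ for the event $A$ as literally stated); both versions are ultimately dominated by the Gaussian factor in the final application.
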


\begin{proof}
In order to bound the probability that the two processes differ in the interval $[0,\ell]$, we consider the following auxiliary events
\begin{equation}
    A = \bigg\{
    \begin{array}{c}
    \text{no Brownian motion associated with a particle starting in the interval} \\
    \big[ 0, m \big] \text{ has displacement larger than } \frac{1}{2}(m-\ell) \text{ within time } s 
    \end{array}
    \bigg\}
\end{equation}
and 
\begin{equation}
    B = \bigg\{
    \begin{array}{c}
    \text{ no particle starting at } x \geq m \text{ has displacement} \\ \text{larger than } x-\ell \text{ within time } s
    \end{array}
    \bigg\}.
\end{equation}

Notice that in the event $B$, no particle starting outside the interval $[0,m]$ falls in the interval $[0,\ell]$ at time $s$. Furthermore, in the event $A$, all particles that end up in the interval $[0,\ell]$ at time $s$ start in the interval $\big[0, \frac{1}{2}(m+\ell) \big]$ and do not touch $\ell$ before time $s$.
In particular, in the event $A \cap B$, $\omega_{s} = \omega_{s}[m]$ in the interval $[0,\ell]$.

To conclude the proof, we need to bound the probability of $A^{c}$ and $B^{c}$. First, let us observe that, for $y \geq 0$,
\begin{equation}
    \P \Big( \sup_{t \leq s} |B_t| \geq y \Big) \leq 2 \P \Big( \sup_{t \leq s} B_t \geq y \Big) \leq 2 \P \big( |B_s| \geq y \big) \leq 4 \P \big( B_s \geq y \big) \leq \frac{4\sqrt{s}}{y \sqrt{2\pi}} e^{-\frac{y^{2}}{2s}}. 
\end{equation}

We now bound
\begin{equation}
    \P \big( A^{c} \big) \leq |E_{n}|\cdot \ell \cdot \P \bigg( \sup_{t \leq s} |B_t| \geq \frac{1}{2}(m-\ell) \bigg) \leq \frac{8\sqrt{s}|E_{n}| \ell}{(m-\ell) \sqrt{2\pi}} e^{-\frac{(m-\ell)^{2}}{8s}}.
\end{equation}
As for the event $B$, we have
\begin{equation}
\begin{split}
    \P \big( B^{c} \big) & \leq \int_{m}^{+\infty} |E_{n}| \P \Big(\sup_{t \leq s} |B_t| \geq x-\ell \Big) \, \textnormal{d} x \\
    & \leq \frac{4\sqrt{s}|E_{n}|}{\sqrt{2\pi}} \int_{m}^{\infty} \frac{1}{x-\ell} e^{-\frac{(x-\ell)^{2}}{2s}} \, \textnormal{d} x \\
    & \leq \frac{4s|E_{n}|}{(m-\ell)^{2}\sqrt{2\pi}}e^{-\frac{(m-\ell)^{2}}{2s}}.
    \end{split}
\end{equation}
Combining the last two bounds completes the proof.
\end{proof}

To highlight the fact that we are now restricting ourselves to a finite domain, we will use the notation $h_{n}(\theta[m]):= g_{n}(\theta)$ for the function $h_{n}$ evaluated in the truncated configuration.
When there is no risk of confusion, we will abuse notation and shorten $\theta[m]$ to $\theta$.

Recall $M$ denotes the number of points in the time interval $[0,m]$. For $1 \leq i,j \leq M$, define the event $B_{s}( i \to j)$ where the Brownian motion associated with the time stamp $t(i)$ moves it to the interval $\big(t(j), t(j+1)\big)$. More formally:
\begin{equation}\label{eq:Brownian_move}
    B_{s} (i \to j) = \big\{ R_{m}\big(t(i)+B_{s}(i)\big) \in \big(t(j),t(j+1)\big)\big\}.
\end{equation}
Note that this ignores the change of the other timestamps. This will be justified in due course.

The following lemma bounds the probability that the noise affects $h$ in terms of the pivotal transitions (recall Definition~\ref{def_pivotality}), weighted by the events where the Brownian motions cause such transitions to occur. Note that each term in the sums below corresponds to a single particle being displaced by a Brownian motion while the others stay in place.
\begin{lemma}\label{russolemma}
  \begin{equation}\label{nonsymmetricsum}
    \P \big( h_{n}( \theta) \neq h_{n}(\theta_s) \big) \leq \E\bigg[\sum_{i=1}^{M} \sum_{j=1}^{M} \mathbbm{1}_{f_{n}(\omega) \neq f_{n}(\omega_{i \to j})} \mathbbm{1}_{B_{s} (i \to j)} \mathbbm{1}_{ i \wedge j \leq L} \bigg] +2\E \big[ M \mathbbm{1}_{C(\theta) > L(\theta)-1} \big].
   \end{equation}
\end{lemma}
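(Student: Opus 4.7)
The plan is to split the event $\{h_{n}(\theta) \neq h_{n}(\theta_{s})\}$ according to whether coalescence is achieved within the first $L-1$ edges. First I would decompose
\[
\{h_{n}(\theta) \neq h_{n}(\theta_{s})\} \subseteq \{\mathcal{C}(\theta) > L(\theta) - 1\} \cup \{\mathcal{C}(\theta_{s}) > L(\theta_{s}) - 1\} \cup \mathcal{G},
\]
where $\mathcal{G}$ is the intersection of the complements of these two coalescence events with $\{f_{n}(\omega) \neq f_{n}(\omega_{s})\}$. This is a valid decomposition because on the complement of all three events we have $h_{n}(\theta) = f_{n}(\omega) = f_{n}(\omega_{s}) = h_{n}(\theta_{s})$. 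Since $\theta_{s} \sim \theta$ and $M \geq 1$ whenever coalescence fails to occur by step $L-1$, a union bound shows that the first two events contribute at most $2\E[M\mathbbm{1}_{\mathcal{C}(\theta) > L(\theta)-1}]$, matching the second term on the right-hand side.

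For the main event $\mathcal{G}$, I would use a hybrid/telescoping argument, introducing the Brownian perturbations one particle at a time. Define intermediate configurations $\omega^{(0)} = \omega, \omega^{(1)}, \ldots, \omega^{(M)} = \omega_{s}$, where $\omega^{(k)}$ is obtained by applying the Brownian displacements only to the first $k$ particles and reordering. A telescoping union bound gives
\[
\mathbbm{1}_{f_{n}(\omega) \neq f_{n}(\omega_{s})} \leq \sum_{k=1}^{M} \mathbbm{1}_{f_{n}(\omega^{(k-1)}) \neq f_{n}(\omega^{(k)})}.
\]
Each summand encodes a single-particle transition: when particle $k$'s displaced time $R_{m}(t(k)+B_{s}(k))$ lies in the interval $(t(j), t(j+1))$, i.e.\ when $B_{s}(k \to j)$ occurs, the passage from $\omega^{(k-1)}$ to $\omega^{(k)}$ is essentially the move $k \to j$ that produces $\omega_{k \to j}$. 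The coalescence assumption ensures that only positions up to $L$ matter, so the indicator $\mathbbm{1}_{k \wedge j \leq L}$ may be inserted at no cost; taking expectations produces the first sum on the right-hand side.

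The hard part will be reconciling the hybrid comparison with the single-particle events $B_{s}(k \to j)$: the latter are defined in terms of the original positions of the other particles, whereas $\omega^{(k-1)}$ already has particles $1,\ldots,k-1$ in displaced locations. The two landing positions agree, and the pivotality transfers cleanly, as long as no previously displaced particle has invaded the interval $(t(j),t(j+1))$; otherwise, we pick up an error that needs to be absorbed. My expectation is that these ``interaction'' events can be absorbed into the residual term $2\E[M\mathbbm{1}_{\mathcal{C}(\theta) > L-1}]$, whose prefactor $M$ arises naturally from a union bound over the $M$ particles that may be involved. If this absorption turns out to be delicate, an alternative would be to refine the decomposition from the outset by tracking the ordering of crossings of the Brownian motions in $[0,s]$ and counting pivotal adjacent transpositions directly, rather than passing through the hybrid configurations.
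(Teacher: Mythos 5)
Your decomposition into coalescence-failure events plus the $\{f_n(\omega)\neq f_n(\omega_s)\}$ event is sound, and the over-bounding $\P(A)\le\E[M\mathbbm{1}_A]$ when $M\ge1$ on $A$ is valid. The real problem is exactly the one you flag at the end, and it is not an absorbable wrinkle but the central obstruction: a hybrid/telescoping chain $\omega^{(0)},\ldots,\omega^{(M)}$ compares $\omega^{(k-1)}$ to $\omega^{(k)}$, where particles $1,\ldots,k-1$ have \emph{already} been displaced, so the interval that the $k$-th displaced timestamp lands in is determined by the mixed (partly-displaced) ordering, not by the original $(t(j),t(j+1))$. The indicator $\mathbbm{1}_{B_s(k\to j)}$ in the target inequality refers to the original positions of all other particles, so it simply does not describe the transition $\omega^{(k-1)}\to\omega^{(k)}$. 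These interaction errors are not coalescence events; they have nothing to do with $\{\mathcal{C}(\theta)>L(\theta)-1\}$, so the residual term cannot swallow them. There is no clean way to push a naive telescoping bound through.

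The device the paper uses to sidestep this is a Margulis--Russo--type interpolation, not a telescoping hybrid. One augments the marked Poisson process with independent uniforms $U_k$, defines $\theta^q$ by displacing only the particles with $U_k\le q$, and writes
\[
\P\big(h_n(\theta)\neq h_n(\theta_s)\big)=\int_0^1 \frac{\partial}{\partial q}\,\P\big(h_n(\theta^0)\neq h_n(\theta^q)\big)\,dq.
\]
Because the thinning construction makes $(\theta^q,\theta^{q+\eps})\overset{d}{=}(\theta,\theta^\eps)$, the derivative is controlled by $\lim_{\eps\to0}\eps^{-1}\P(h_n(\theta)\neq h_n(\theta^\eps))$, and in the $\eps\to0$ regime only \emph{one} particle is displaced at first order, with an $O(\eps^2)$ error from two simultaneous displacements. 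This gives
\[
\P\big(h_n(\theta)\neq h_n(\theta_s)\big)\le \E\Big[\sum_{i=1}^M\mathbbm{1}_{h_n(\theta)\neq h_n(\theta(i))}\Big],
\]
where crucially $\theta(i)$ displaces \emph{only} the $i$-th timestamp while every other particle stays in its original position. That is precisely what makes the events $B_s(i\to j)$ (defined w.r.t.\ original positions) the right objects, and it is the step your telescoping cannot reproduce. Once you have this single-particle reduction, splitting $\mathbbm{1}_{h_n(\theta)\neq h_n(\theta(i))}$ into the pivotal-move indicator, $\mathbbm{1}_{C(\theta)>L(\theta)}$, and $\mathbbm{1}_{C(\theta(i))>L(\theta(i))}$, and using $L(\theta(i))\ge L(\theta)-1$ together with $C(\theta(i))\overset{d}{=}C(\theta)$, yields the stated $2\E[M\mathbbm{1}_{C(\theta)>L(\theta)-1}]$ error term. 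So: correct ingredients in your proposal, but the missing idea is the Poisson-thinning interpolation that linearizes the noise and lets you compare to the \emph{unperturbed} configuration one particle at a time.
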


\begin{proof}
For this proof, we interpolate between the processes $\omega$ and $\omega_{s}$ as follows. Consider a Poisson point process $\mathscr{P}^{*}$ taking values on $\vec{E}_{n} \times \R_{+} \times C[0, +\infty) \times [0,1]$ with intensity $\mu \otimes \lambda \otimes W \otimes U$, where $U$ denotes the uniform measure on $[0,1]$.
Given a realization $(\omega(k), t(k), B(k), U_k)_{k \in \N}$ of such a process, define $\theta^{q}$ as the collection 
\begin{equation}
    \Big(\omega(k), R_{m}\big(t(k)+B_s(k) \mathbbm{1}_{[0,q]}(U_k) \big)\Big)_{k \in \N},
\end{equation}
combined with a reordering so that the time stamps are in non-decreasing order.
In $\theta^{q}$, the only particles moved are those such that $u\leq q$. In particular, one has $\theta^{0} = \theta$ and $\theta^{1} = \theta_{s}$.

Before proceeding, let us argue that the distribution of $\theta^{q}$ does not depend on the value of $q$.
In fact, notice that the collections $\{ (\omega(k), R_{m}(t(k)+B_s(k))) : k \in \Z \text{ and } U_k \leq q \}$ and $\{ (\omega(k), t(k)) : k \in \Z \text{ and } U_k > q \}$ are two independent Poisson point processes taking values on $\vec{E} \times \R$ with intensity measures $q \mu \otimes \lambda$ and $(1-q) \mu \otimes \lambda$, respectively.
The process $\theta^{q}$ can be obtained as the union of the two collections and is therefore distributed as a Poisson point process on $\vec{E}_{n} \times \R$ with intensity $\mu \otimes \lambda$. An argument analogous to this one implies that the distribution of a pair $(\theta^{q}, \theta^{q+\tilde{q}})$ depends only on the difference $\tilde{q}$ and not on the value of $q$.

Observe now that the fundamental theorem of calculus yields
\begin{equation}\label{eq:lemma_integral}
\P \big( h_{n}(\theta) \neq h_{n}(\theta_{s}) \big) = \int_{0}^{1} \frac{\partial}{\partial q} \P \big( h_{n}(\theta^{0}) \neq h_{n}(\theta^{q}) \big) dq.
\end{equation}

Calculating the derivative above, we obtain
\begin{equation}\label{eq:lemma_deriative}
\begin{split}
\frac{\partial}{\partial q} \P \big( h_{n}(\theta) & \neq h_{n}(\theta^{q}) \big) = \lim_{ \eps \to 0} \frac{\P \big( h_{n}(\theta) \neq h_{n}(\theta^{q+\eps}) \big)-\P \big( h_{n}(\theta) \neq h_{n}(\theta^{q}) \big)}{\eps} \\
& \leq \lim_{\eps \to 0} \frac{\P \big( h_{n}(\theta) \neq h_{n}(\theta^{q}) \big) + \P\big( h_{n}(\theta^{q}) \neq h_{n}(\theta^{q+\eps}) \big)-\P \big( h_{n}(\theta) \neq h_{n}(\theta^{q}) \big)}{\eps} \\
& = \lim_{\eps \to 0} \frac{ \P \big( h_{n}(\theta) \neq h_{n}(\theta^{\eps}) \big)}{\eps},
\end{split}
\end{equation}
where the third equality follows from the fact that the distributions of the pairs $(\theta^{q}, \theta^{q+\varepsilon})$ and $(\theta, \theta^{\varepsilon})$ are the same.

Analogously to the derivation of the Margulis-Russo formula, if we denote by $\theta(i)$ the configuration when only the $i$-th time entry is noised, we have 
\begin{equation}\label{eq:lemma_margulis_russo}
  \begin{split}
    \P \big( h_{n}(\theta) \neq h_{n}(\theta^{\eps}) \big) & \leq \E\bigg[\sum_{i=1}^{M} \mathbbm{1}_{h_{n}(\theta) \neq h_{n}(\theta(i))} \mathbbm{1}_{U_{i} \leq \eps} +\sum_{\substack{ i_{1},i_{2} \leq M \\ i_{1} \neq i_{2}}} \mathbbm{1}_{U_{i_{1} \leq \eps}}\mathbbm{1}_{U_{i_{2}\leq \eps}}\bigg] \\
    & \leq \varepsilon \E\bigg[\sum_{i=1}^{M} \mathbbm{1}_{h_{n}(\theta) \neq h_{n}(\theta(i))} \bigg] + \E[M^{2}]\eps^{2}.
  \end{split}
\end{equation}
In particular, combining Equations~\eqref{eq:lemma_integral},~\eqref{eq:lemma_deriative}, and~\eqref{eq:lemma_margulis_russo}, we obtain
\begin{equation}
    \P \big( h_{n}( \theta) \neq h_{n}(\theta_s) \big) \leq \E\bigg[\sum_{i=1}^{M} \mathbbm{1}_{h_{n}(\theta) \neq h_{n}(\theta(i))} \bigg].
\end{equation}

Let us now bound the last term in the equation above. Recall that $h_{n}(\theta) =  f_{n}(\omega) \mathbbm{1}_{C(\theta) \leq L(\theta)}$. In particular, in order for $h_{n}(\theta) \neq h_{n}(\theta(i))$, either $f_{n}(\omega) \neq f_{n}(\omega_{i \to j})$ and the Brownian motion associated to the $i$-th time stamp moves to an interval $(t_{j}, t_{j+1})$, or $C(\theta) > L(\theta)$, or $C(\theta(i)) > L(\theta(i))$. Furthermore, we can further require that either $i \leq L$ or $j \leq L$, since otherwise the movement of the $i$-th time stamp to the interval $(t_{j}, t_{j+1})$ does not alter the value of $h_{n}$. This yields the bound
\begin{equation}\label{eq:lemma_splititng_h}
    \sum_{i=1}^{M} \mathbbm{1}_{h_{n}(\theta) \neq h_{n}(\theta(i))} \leq \sum_{i=1}^{M} \bigg( \sum_{j=1}^{M} \mathbbm{1}_{f_{n}(\omega) \neq f_{n}(\omega_{i \to j})} \mathbbm{1}_{B_{s} (i \to j)} \mathbbm{1}_{ i \wedge j \leq L} \bigg)  + \mathbbm{1}_{C(\theta) > L(\theta)} + \mathbbm{1}_{C(\theta(i)) > L(\theta(i))}.
\end{equation}

Observe now that the distribution of $C(\theta(i))$ is the same as the one from $C(\theta)$, since it depends only on the list of edge selections $\omega$ and $\omega(i)$. Furthermore, we have $L(\theta(i)) \geq L(\theta)-1$, since at most one time-stamp is removed from the interval $[0, \ell]$. In particular, we obtain
\begin{equation}
    \E\bigg[\sum_{i=1}^{M} \mathbbm{1}_{C(\theta(i)) > L(\theta(i))} \bigg] \leq \E \big[ M \mathbbm{1}_{C(\theta) > L(\theta)-1} \big],
\end{equation}
which, when combined with~\eqref{eq:lemma_splititng_h} yields
\begin{equation}
     \E\bigg[\sum_{i=1}^{M} \mathbbm{1}_{h_{n}(\theta) \neq h_{n}(\theta(i))} \bigg] \leq \E\bigg[\sum_{i=1}^{M} \sum_{j=1}^{M} \mathbbm{1}_{f_{n}(\omega) \neq f_{n}(\omega_{i \to j})} \mathbbm{1}_{B_{s} (i \to j)} \mathbbm{1}_{ i \wedge j \leq L} \bigg] +2\E \big[ M \mathbbm{1}_{C(\theta) > L(\theta)-1} \big],
\end{equation}
concluding the proof.
\end{proof}

\nc{c_coal_2}
Our next goal is to upper bound the right-hand side of \eqref{nonsymmetricsum}. 
We start with the second term, which is bounded with the aid of the following lemma.
Recall that $L(\theta)$ is the number of edge activations in $[0,\ell]$, and similarly $M$ and $m$.

\begin{lemma}\label{lemma:coal}
There exists $\uc{c_coal_2}>0$ such that
\begin{equation}
    \E \big[ M \mathbbm{1}_{C(\theta) \ge L(\theta)} \big] \leq  12m |E_{n}|e^{-\uc{c_coal_2}\frac{\ell}{n|E_{n}|}}.
\end{equation}
\end{lemma}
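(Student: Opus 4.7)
My approach is to decompose $\{C(\theta) \geq L(\theta)\}$ into two rare events and control the contribution of each to $\E[M \mathbbm{1}_{C \geq L}]$. Set $\lambda := |E_{n}|\ell = \tfrac{1}{2}\E[L]$; since $\{C \geq L\} \subseteq \{L \leq \lambda\} \cup \{C \geq \lambda\}$, we obtain
\begin{equation*}
\E\bigl[M \mathbbm{1}_{C \geq L}\bigr] \leq \E\bigl[M \mathbbm{1}_{L \leq \lambda}\bigr] + \E\bigl[M \mathbbm{1}_{C \geq \lambda}\bigr].
\end{equation*}
The first summand is handled using the decomposition $M = L + M'$, where $L$ and $M' := M - L$ are independent Poissons of means $2|E_{n}|\ell$ and $2|E_{n}|(m-\ell)$; this gives $\E[M \mathbbm{1}_{L \leq \lambda}] \leq 2m|E_{n}| \P(L \leq \lambda)$, and a standard Chernoff bound for Poisson random variables yields $\P(L \leq \lambda) \leq e^{-c_{0}|E_{n}|\ell}$ for an absolute $c_{0} > 0$. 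Since $n|E_{n}|^{2} \geq 1$, this is in turn bounded by $e^{-c_{0}\ell/(n|E_{n}|)}$. For the second summand, Cauchy--Schwarz together with $\sqrt{\E[M^{2}]} \leq 3 m|E_{n}|$ (using $M \sim \mathrm{Poisson}(2m|E_{n}|)$) gives $\E[M \mathbbm{1}_{C \geq \lambda}] \leq 3 m|E_{n}| \sqrt{\P(C \geq \lambda)}$.

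The remaining task is to control $\P(C \geq \lambda)$. Theorem~\ref{t:coal} combined with~\eqref{t:bound_thit} yields $\E[\mathcal{C}^{*}] \leq 2 \uc{c_coal}(n-1)|E_{n}|$ for the continuous-time coalescence of the system of backward walks, and a standard coupling (coalescence starting from a subset of vertices is dominated by coalescence from the whole vertex set) lets one iterate Markov's inequality in blocks of length $2 \E[\mathcal{C}^{*}]$ to obtain the tail
\begin{equation*}
\P(\mathcal{C}^{*} > t) \leq 2 \cdot 2^{-t/(2 \E[\mathcal{C}^{*}])} \leq 2 \exp\!\bigl(-c_{1} t/(n|E_{n}|)\bigr).
\end{equation*}
Since $C$ counts events rather than measures continuous time, I then split $\P(C \geq \lambda) \leq \P(N_{\ell/4} \geq \lambda) + \P(\mathcal{C}^{*} > \ell/4)$, where $N_{s}$ is a Poisson random variable of mean $2|E_{n}|s$. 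Because $\E[N_{\ell/4}] = \lambda/2$, Chernoff bounds $\P(N_{\ell/4} \geq \lambda) \leq e^{-c_{2}|E_{n}|\ell}$, and the tail of $\mathcal{C}^{*}$ controls the other summand; both are at most $\exp(-c_{3} \ell/(n|E_{n}|))$ after using $n|E_{n}|^{2} \geq 1$.

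Substituting these estimates back and absorbing universal constants into a single rate $\uc{c_coal_2}$ yields $\E[M \mathbbm{1}_{C \geq L}] \leq 12 m |E_{n}| e^{-\uc{c_coal_2}\ell/(n|E_{n}|)}$. The principal technical subtlety is the conversion from the continuous-time coalescence tail supplied by Oliveira's theorem to a tail for the discrete count $C$; the splitting with the auxiliary Poisson count $N_{s}$ is the cleanest way I see to achieve this, using the tail of $\mathcal{C}^{*}$ on one side and Poisson concentration on the other.
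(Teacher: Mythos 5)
Your proof is correct, but it takes a genuinely different (and longer) route than the paper's. The paper uses a single identity: since $\mathcal{C}(\theta)$ counts backward steps and $L(\theta)$ counts activations in $[0,\ell]$, the event $\{\mathcal{C}(\theta)\ge L(\theta)\}$ is (by time-reversal) exactly the event $\{\mathcal{C}^{*}\ge\ell\}$, so $\P(\mathcal{C}(\theta)\geq L(\theta)) = \P(\mathcal{C}^{*}\geq\ell)$, and the whole lemma collapses to one Cauchy--Schwarz step plus the iterated-Markov tail bound on $\mathcal{C}^{*}$ (the same iteration you use). You instead bridge the discrete/continuous gap in two stages: first replace the random $L$ by a deterministic threshold $\lambda=|E_n|\ell$ at the cost of a Poisson lower-tail event $\{L\le\lambda\}$, and then split $\{C\ge\lambda\}$ into $\{N_{\ell/4}\ge\lambda\}\cup\{\mathcal{C}^{*}>\ell/4\}$ at the cost of a Poisson upper-tail event. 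This works, but requires two extra Chernoff bounds and the observation $n|E_n|^{2}\ge1$ to absorb them into the same exponent, and the containment $\{C\ge\lambda\}\subseteq\{N_{\ell/4}\ge\lambda\}\cup\{\mathcal{C}^{*}>\ell/4\}$ implicitly requires the same time-reversal coupling (realizing $\mathcal{C}(\theta)$ and $\mathcal{C}^{*}$ on the same forward Poisson process) that makes the paper's identity valid --- so the extra machinery does not actually avoid that step. One small simplification you could make: since $\mathcal{C}(\theta)$ is a function of the edge marks $\omega$ alone while $M$ is a function of the time stamps alone, they are independent, so $\E[M\mathbbm{1}_{C\ge\lambda}]=\E[M]\,\P(C\ge\lambda)$ exactly; the Cauchy--Schwarz (which halves your exponent) is unnecessary once you have replaced $L$ by the deterministic $\lambda$. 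It is needed in the paper's version precisely because the random $L$ keeps the two factors correlated.
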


\begin{proof}
Start by noting that the Cauchy-Schwarz inequality yields
\begin{equation}\label{Mcauchy}
  \begin{split}
    \E \big[ M \mathbbm{1}_{C(\theta) \ge L(\theta)} \big] & \leq \sqrt{\E[M^{2}]\P(\mathcal{C}(\theta)\ge L(\theta))} \\
    & \leq 12 m|E_{n}| \sqrt{\P(\mathcal{C}(\theta)\ge L(\theta))}.
  \end{split}
\end{equation}

We now go back to continuous time to employ Theorem~\ref{t:coal}. Notice that
\begin{equation}
    \P\big(\mathcal{C}(\theta) \geq L(\theta)\big) = \P(\mathcal{C}^{*} \geq \ell ).
\end{equation}
Markov inequality combined with Theorem~\ref{t:coal} directly implies, for every $t \geq 0$,
\begin{equation}
    \P(\mathcal{C}^{*} \geq t ) \leq \frac{\uc{c_coal} T_{\textnormal{hit}}}{t}.
\end{equation}

Now, to examine coalescence up to time $kt$, we divide the time interval into $k$ disjoint intervals of size $t$. If the random walks do not coalesce in time $kt$, then there cannot be coalescence in any of these sub-intervals. Independence then yields
\begin{equation}
    \P(\mathcal{C}^{*} \geq kt ) \leq \P(\mathcal{C}^{*} \geq t )^{k} \leq \bigg( \frac{\uc{c_coal} T_{\textnormal{hit}}}{t} \bigg)^{k}.
\end{equation}
We now choose $t =  2e\uc{c_coal}(n-1)|E_{n}|$ so that
\begin{equation}
    \P(\mathcal{C}^{*} \geq kt ) \leq \P(\mathcal{C}^{*} \geq t )^{k} \leq e^{-k}.
\end{equation}
We now apply this to $k = \lfloor \frac{\ell}{t} \rfloor$ to obtain
\begin{equation}
    \P(\mathcal{C}^{*} \geq \ell ) \leq \P(\mathcal{C}^{*} \geq \ell )^{k} \leq e^{-\lfloor \frac{\ell}{t} \rfloor} \leq e^{- \frac{\ell}{2e\uc{c_coal}(n-1)|E_{n}|}-1},
\end{equation}
which yields, when combined with Equation \eqref{Mcauchy},
\begin{equation}
    \E \big[ M \mathbbm{1}_{C(\theta) \geq L(\theta)} \big] \leq  12 m |E_{n}| e^{-\uc{c_coal_2}\frac{\ell}{n|E_{n}|}},
\end{equation}
for $\uc{c_coal_2} = \frac{1}{4e\uc{c_coal}}$, concluding the proof.
\end{proof}

\nc{c_pivotal_bound}
As for the first term in the right-hand side of~\eqref{nonsymmetricsum}, the following lemma provides a bound.
\begin{lemma}\label{pivotalbound}
There exists a constant $\uc{c_pivotal_bound}>0$ such that
  \begin{equation}\label{auxbound}
    \begin{split}
      \E\bigg[\sum_{i=1}^{M} \sum_{j=1}^{M} & \mathbbm{1}_{f_{n}(\omega) \neq f_{n}(\omega_{i \to j})} \mathbbm{1}_{B_{s} (i \to j)} \mathbbm{1}_{ i \wedge j \leq L} \bigg] \\
      & \leq \uc{c_pivotal_bound} \sqrt{s} \bigg( |E_{n}| (1+ m|E_{n}|)  \frac{m}{m-\ell}e^{-(m-\ell)^{2}/2s} + n \Delta_{n} \big( 1 + e^{2|E_{n}|^{2}s} + |E_{n}| \sqrt{s} \big) \bigg).
    \end{split}
  \end{equation}
\end{lemma}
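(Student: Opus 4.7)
The plan is to reduce each transposition $i \to j$ to a sequence of adjacent swaps and then combine the pivotal flip estimate of Lemma~\ref{lemma_expected_piv} with a bound on the expected number of Brownian crossings of the Poisson time stamps.

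The starting observation is a path decomposition: for $i < j$, the configuration $\omega_{i \to j}$ is obtained from $\omega$ by $j - i$ successive adjacent transpositions, with intermediate configurations $\omega^{(0)} = \omega, \omega^{(1)}, \ldots, \omega^{(j-i)} = \omega_{i \to j}$. If the consensus opinions differ at the endpoints, at least one intermediate swap must itself be pivotal. Since each $\omega^{(r)}$ has the same i.i.d.\ law as $\omega$ and is independent of the Brownian and Poisson data, setting $\bar{q}_k := \P(\text{swap } k \leftrightarrow k+1 \text{ is pivotal for } \omega)$, taking $\omega$-expectation first, swapping the order of summation, and using that for fixed $i$ the events $B_s(i \to j)$ are disjoint over $j$, I would obtain
\[
\E[\text{LHS of \eqref{auxbound}}] \leq 2 \sum_{k \geq 1} \bar{q}_k \cdot \E\bigg[\sum_{i \leq k \wedge L} \mathbbm{1}_{R_m(t(i) + B_s(i)) > t(k+1)}\bigg],
\]
where the factor $2$ reflects the symmetry between rightward $(i<j)$ and leftward $(i>j)$ moves, and by Lemma~\ref{lemma_expected_piv} we have $\sum_k \bar{q}_k \leq 3n\Delta_n/|E_n|$.

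The main estimate is to control the inner expectation uniformly in $k$. Conditional on the Poisson process, the Brownian increments $B_s(i)$ are i.i.d.\ $N(0,s)$, and for $i \leq k$ the distance $t(k+1) - t(i)$ has a $\textnormal{Gamma}(k+1-i, 2|E_n|)$ distribution. A direct computation, ignoring reflection at $m$, gives the uniform bound
\[
\E\bigg[\sum_{i \leq k} \P\big(B_s > t(k+1) - t(i)\big)\bigg] \leq |E_n|\sqrt{2s/\pi},
\]
which combined with $\sum_k \bar{q}_k \leq 3n\Delta_n/|E_n|$ produces the dominant term of order $n\Delta_n \sqrt{s}$. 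The factor $e^{2|E_n|^2 s}$ appearing in the target bound arises from a more careful analysis of the adjacent case $|i-j| = 1$, where the Laplace-type identity $\P(B_s > \textnormal{Exp}(2|E_n|)) = \tfrac{1}{2} - e^{2|E_n|^2 s}\P(B_1 > 2|E_n|\sqrt{s})$ appears naturally when averaging the Gaussian tail against the exponential distribution of the gap, while the correction $|E_n|\sqrt{s}$ absorbs higher-order contributions from pairs of particles simultaneously making large displacements.

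The final piece is the reflection term: when a Brownian motion wraps around the boundary $m$ and re-enters $[0, \ell]$ from the right, the bulk estimate above fails. Since the probability that a single Brownian increment exceeds $m - \ell$ is at most $\tfrac{\sqrt{s}}{(m-\ell)\sqrt{2\pi}} e^{-(m-\ell)^2/(2s)}$, I would multiply by the expected total number of points $\E[M] \leq 2m|E_n|$ together with a suitable crossing factor to produce the term $\sqrt{s}|E_n|(1 + m|E_n|)\tfrac{m}{m-\ell} e^{-(m-\ell)^2/(2s)}$. The main obstacle will be the rigorous bookkeeping of these boundary reflections, since the reflected Brownian path may in principle visit both endpoints of $[0, m]$ several times within $[0, s]$; however, all such scenarios are dominated by the Gaussian tail of a single large excursion, so after conditioning on the complementary event $\{\sup_i |B_s(i)| \leq m - \ell\}$ the analysis cleanly reduces to the bulk Poisson-Brownian system treated above. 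Assembling the bulk, adjacent, and reflection contributions yields the stated bound.
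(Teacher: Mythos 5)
Your proposal follows the same overall architecture as the paper's proof: telescope the transposition $i\to j$ into adjacent swaps, reduce to the adjacent pivotal probabilities $\bar q_k$ via stationarity and Lemma~\ref{lemma_expected_piv}, use disjointness of the events $B_s(i\to j)$ over $j$ to collapse the inner sum, and treat the reflection separately. The genuine improvement in your argument is the treatment of the Gaussian-vs-Poisson tail sum: the paper bounds $\sum_{j\geq 1}\P(X\geq\gamma_j)$, with $X\sim N(0,s)$ and $\gamma_j\sim\Gamma(j,2|E_n|)$, by splitting off $j=1$ (which produces the Laplace factor $e^{2|E_n|^2 s}$) and controlling $j\geq 2$ via Chernoff on $\gamma_j$ (which produces the $|E_n|\sqrt{s}$ correction); your observation that one may couple $(\gamma_j)_j$ as the arrival times of a single rate-$2|E_n|$ Poisson process and compute $\sum_j\P(X\geq\gamma_j)=\E[N(X^+)]=2|E_n|\E[X^+]=|E_n|\sqrt{2s/\pi}$ is both exact and strictly stronger, so the $e^{2|E_n|^2 s}$ and $|E_n|\sqrt{s}$ terms in the statement are not needed at all under your route. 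For this reason your paragraph attempting to ``explain'' the $e^{2|E_n|^2 s}$ factor via the $j=1$ Laplace identity is a red herring: it is reverse-engineering the paper's looser bound rather than anything your argument requires, and you should simply drop it. Two points deserve more care in a finished write-up. First, the reflection term: after you have already factored out $\bar q_k$, summing the reflection probability over $i\leq k\wedge L$ and then over $k$ with weights $\bar q_k$ gives a bound of the form $\E[L]\cdot\sum_k\bar q_k\cdot\int_{m-\ell}^\infty P_s$, which is of order $\ell\, n\Delta_n\,\sqrt{s}/(m-\ell)\cdot e^{-(m-\ell)^2/2s}$; one should verify this is dominated by the stated $|E_n|(1+m|E_n|)\frac{m}{m-\ell}e^{-(m-\ell)^2/2s}\sqrt s$ using $|E_n|\geq n/2$ and $\ell\leq m$ (it is, since $n\Delta_n\leq n^2\leq 4|E_n|^2$), or else, as the paper does, apply the crude bound $\P(\mathrm{piv})\leq 1$ \emph{before} telescoping for the reflection part, yielding the $\E[M^2]$ count directly. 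Second, the left-moves case ($i>j$) is not literally a mirror image of the right-moves case under the paper's stated definition of $B_s(i\to j)$ (there is an off-by-one: a particle landing in $(t(j),t(j+1))$ with $j<i$ produces $\omega_{i\to j+1}$, not $\omega_{i\to j}$); with the corrected identification both sides again give the same Poisson-crossing estimate, but you should state this rather than hide it in ``the factor $2$ reflects symmetry.''
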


The analysis of this lemma will be done in two steps. We first analyze the terms referring to the Brownian motions, and then the amount of pivotal swaps will be estimated with the help of the results in Section~\ref{stabsec}.

\begin{proof}
Denote by $\mathcal{F}_{\text{init}}$ the $\sigma$-algebra generated by the time stamps  $\{t(k)\}_{k \geq 1}$ of the random vector $\theta$. This allows us to write
\begin{equation}
   \begin{split}
      \E\bigg[\sum_{i=1}^{M} \sum_{j=1}^{M} & \mathbbm{1}_{f_{n}(\omega) \neq f_{n}(\omega_{i \to j})} \mathbbm{1}_{B_{s} (i \to j)} \mathbbm{1}_{ i \wedge j \leq L} \bigg] \\
      & \qquad = \E\bigg[\sum_{i =1}^{M} \sum_{j=1}^{M} \P\big( f_{n}(\omega) \neq f_{n}(\omega_{i \to j})\big)  \P\big( B_{s} (i \to j) \big| \mathcal{F}_{\text{init}} \big)\mathbbm{1}_{ i \wedge j \leq L}  \bigg].
   \end{split}
\end{equation}

Let us assume $i < j$, so that the first estimate we perform will be on the sum
\begin{equation}
      \E\bigg[\sum_{i =1}^{L} \sum_{j=i+1}^{M} \P\big( f_{n}(\omega) \neq f_{n}(\omega_{i \to j})\big)  \P\big( B_{s} (i \to j) \big| \mathcal{F}_{\text{init}} \big)\bigg],
\end{equation}
the sum over $j < i$ is treated analogously.

We begin by examining $\P \big( B_{s} (i \to j) \big\vert \mathcal{F}_{\text{init}} \big)$. Observe that, if $R_{m}(t(i) + B_{i}(s)) \in  (t(j), t(j+1))$, then either
\begin{equation}\label{swapwalk}
    t(i)+B_{s}(i) \in (-t(j+1),-t(j)) \cup (t(j),t(j+1))
    \quad \text{or} \quad
    |t(i)+B_{s}(i)| > m.
\end{equation}
Since $t(i) \leq \ell$, the last condition above implies $|B_{s}(i)| \geq m-\ell$, which in turns yields the following bound
\begin{equation}\label{brownianest}
\P\big( B_{s} (i \to j) \big| \F_{\text{init}} \big) \leq \int_{t(j) - t(i)}^{t(j+1)-t(i)}P_{s}(y)dy + \int_{t(i)+ t(j)}^{t(i)+t(j+1)} P_{s}(y)dy+2\int_{m-\ell}^{\infty} P_{s}(y)dy,
\end{equation}
where $P_{s}(y) := \frac{1}{\sqrt{2\pi s}} e^{-y^{2}/2s}$ denotes the density of the normal distribution. Since $ t(i)+t(j) \geq t(j)-t(i),$ for every  $i,j \geq 1$,
\begin{equation}\label{leftmirror}
\P\big( B_{s} (i \to j) \big| \F_{\text{init}} \big) \leq 2 \int_{t(j) - t(i)}^{t(j+1)-t(i)}P_{s}(y)dy + 2\int_{m-\ell}^{\infty} P_{s}(y)dy.
\end{equation}

Observe now that
\begin{equation}\label{lrpiv}
\P(f_{n}(\omega) \neq f_{n}(\omega_{i\to j})) \leq \sum_{k=i}^{j-1}\P(f_{n}(\omega_{i\to k}) \neq f_{n}(\omega_{i\to k+1})).
\end{equation}
Combining Equations~\eqref{leftmirror} and~\eqref{lrpiv} we get
\begin{equation}\label{furtherbound}
\begin{split}
  \E\bigg[ & \sum_{i =1}^{L} \sum_{j=i+1}^{M} \P\big( f_{n}(\omega) \neq f_{n}(\omega_{i \to j}) \big) \P(B_{s}\big(i \rightarrow j \big) | \mathcal{F}_{\text{init} } \big) \bigg] \\ 
  & \leq 2 \E\bigg[ \bigg(\sum_{i=1}^{L} \sum_{j=i+1}^{M} \sum_{k=i}^{j-1} \P\big(f_{n}(\omega_{i\to k}) \neq f_{n}(\omega_{i\to k+1})\big) \int_{t(j) - t(i)}^{t(j+1)-t(i)}P_{s}(y)dy\bigg) + M^{2}\int_{m-\ell}^{\infty} P_{s}(y)dy \bigg].
\end{split}
\end{equation}
We will bound the two terms on the right-hand side of the equation above separately.

The second term above is simpler, and follows by noticing that
\begin{equation}\label{rightmirror}
  \begin{split}
    \E[M^{2}] \int_{m-\ell}^{\infty} P_{s}(y) dy & \leq 2m|E_{n}| (1+ 2m|E_{n}|)  \frac{\sqrt{s}}{\sqrt{2\pi}(m-\ell)}e^{-(m-\ell)^{2}/2s} \\
    & \leq |E_{n}|\sqrt{s} (1+ 2m|E_{n}|)  \frac{m}{m-\ell}e^{-(m-\ell)^{2}/2s}.
  \end{split}
\end{equation}

To bound the first term in the right-hand side of~\eqref{furtherbound}, start by noticing that stationarity implies that, for any $i, k \geq 1$,
\begin{equation}
    \P(f_{n}(\omega_{i \to k}) \neq f_{n}(\omega_{i \to k+1})) = \P(f_{n}(\omega) \neq f_{n}(\omega_{k \to k+1})),
\end{equation}
so that
\begin{equation}\label{eq:lemma_bound_sums}
  \begin{split}
    \E\bigg[ \sum_{i=1}^{L} \sum_{j=i+1}^{M} & \sum_{k=i}^{j-1} \P\big(f_{n}(\omega_{i\to k}) \neq f_{n}(\omega_{i\to k+1})\big) \int_{t(j) - t(i)}^{t(j+1)-t(i)}P_{s}(y)dy \bigg] \\
    & \leq \E\Bigg[ \sum_{k=1}^{M-1} \P(f_{n}(\omega)\neq f_{n}(\omega_{k \to k+1})) \sum_{i=1}^{L} \sum_{j=i+1}^{M} \mathbbm{1}_{\{i \leq k \leq j-1\}} \int_{t(j) - t(i)}^{t(j+1)-t(i)} P_{s}(y) dy \Bigg] \\
    & \leq \E\Bigg[ \sum_{k=1}^{M-1} \P(f_{n}(\omega) \neq f_{n}(\omega_{k\to k+1})) \sum_{i=1}^{L} \mathbbm{1}_{i \leq k} \sum_{j= k+1}^{M} \int_{t(j) - t(i)}^{t(j+1)-t(i)} P_{s}(y) dy \Bigg] \\
    & \leq \sum_{k=1}^{\infty} \P(f_{n}(\omega) \neq f_{n}(\omega_{k \to k+1})) \sum_{i=1}^{k} \E\bigg[ \int_{t(k+1) - t(i)}^{\infty} P_{s}(y) dy \bigg].
  \end{split}
\end{equation}
Since $t(j+1) - t(j) \sim \text{Exp}(2|E_{n}|)$, for every $j \geq 1$, if we denote by $(X_{j}, \gamma_{j})$ a pair of independent random variables with distributions $N(0,s)$ and $\Gamma(j, 2|E_{n}|)$, respectively,
we can bound the right-hand side of Equation~\eqref{eq:lemma_bound_sums} by\footnote{Recall the sum of $n$ independent random variables with distribution $\text{Exp}(\lambda)$ has distribution $\Gamma (n, \lambda)$.}
\begin{equation}\label{eq:lemma_nice_bound}
    \sum_{k=1}^{\infty} \P(f_{n}(\omega) \neq f_{n}(\omega_{k \to k+1})) \sum_{j=1}^{\infty} \P( X_{j} \geq \gamma_{j}).
\end{equation}

Now, Lemma~\ref{lemma_expected_piv} yields
\begin{equation}\label{fpivotalbound}
\sum_{k=1}^{\infty}\P(f_{n}(\omega)\neq f_{n}(\omega_{k \to k+1})) \leq 3 \frac{n \Delta_{n}}{|E_{n}|}.
\end{equation}

\nc{c_lemma_1}

As for the second summation in~\eqref{eq:lemma_nice_bound}, we first consider the case $j=1$ separately, and bound the probability of interest considering the cases where $\gamma_{1} \leq \sqrt{s}$ or $\gamma_{1}>\sqrt{s}$. We obtain
\begin{equation}\label{normalone}
  \begin{split}
     \P(X_{1} \geq \gamma_{1}) & \leq \P\big( \gamma_{1} \leq \sqrt{s} \big) + \P\big( N(0,s) \geq \gamma_{1} >  \sqrt{s} \big) \\
     & = (1-e^{-2|E_{n}|\sqrt{s}}) + \frac{1}{\sqrt{2\pi}} \int_{\sqrt{s}}^{\infty}\bigg(\int_{x/\sqrt{s}}^{\infty}e^{-y^{2}/2}dy\bigg) 2|E_{n}| e^{-2|E_{n}|x}dx \\
     & \leq 2|E_{n}|\sqrt{s} \bigg( 1 + \frac{1}{\sqrt{2\pi}}\int_{\sqrt{s}}^{\infty} \frac{1}{x}e^{-x^{2}/2s}  e^{-2|E_{n}|x}dx \bigg) \\
     & \leq 2 |E_{n}| \sqrt{s}\bigg( 1 +  \frac{e^{2|E_{n}|^{2}s}}{\sqrt{2\pi}} \int_{\sqrt{s}}^{\infty}\frac{1}{x}e^{-(x+2|E_{n}|s)^{2}/2s} dx \bigg) \\
     & \leq 2|E_{n}|\sqrt{s} \bigg(1 + \frac{e^{2|E_{n}|^{2}s}}{\sqrt{2\pi}} \int_{1}^{\infty} \frac{1}{y}e^{- (y+2|E_{n}|\sqrt{s})^{2}/2} dy \bigg) \\
     & \leq 2|E_{n}|\sqrt{s} \big( 1 + e^{2|E_{n}|^{2}s} \big).
  \end{split}
\end{equation}
When $j \geq 1$, notice that Chernoff's bound yields
\begin{equation}\label{gammabound}
  \begin{split}
     \P(\gamma_{j} \leq \sqrt{s}\log j) & = \inf_{t>0} e^{t\sqrt{s}\log j}  \bigg( 1 + \frac{t}{2|E_{n}|}\bigg)^{-j} \\
     & \leq e \bigg( 1 + \frac{1}{2 |E_{n}| \sqrt{s} \log j}\bigg)^{-j},
  \end{split}
\end{equation}
where in the last inequality we choose $t = (\sqrt{s} \log j )^{-1}$.
Observe that for $j \geq 2$,
\begin{equation}
  \begin{split}
     \bigg( 1 + \frac{1}{2 |E_{n}| \sqrt{s} \log j}\bigg)^{j} & \geq 1 + j \frac{1}{2 |E_{n}| \sqrt{s} \log j} + \frac{j(j-1)}{2} \bigg( \frac{1}{2 |E_{n}| \sqrt{s} \log j } \bigg)^{2} \\
     & \geq \frac{1}{16 |E_{n}|^{2} s} \frac{j^{2}}{\log^{2} j}.
  \end{split}
\end{equation}
Hence, summing over $j \geq 2$,
\begin{equation}
\sum_{j=2}^{\infty}\P(\gamma_{j} \leq \sqrt{s} \log j) \leq  16 e |E_{n}|^{2} s \sum_{j=2}^{\infty} \frac{\log^{2} j}{j^{2}} \leq  100|E_{n}|^{2}s .
\end{equation}

We now bound
\begin{equation}\label{normalgamma}
  \begin{split}
     \sum_{j=2}^{\infty} \P( X_{j} \geq \gamma_{j}) & \leq 100|E_{n}|^{2}s + \sum_{j=2}^{\infty} \E \bigg[\frac{\sqrt{s}}{\sqrt{2\pi} \gamma_{j}} e^{-\gamma_{j}^{2}/2s} \mathbbm{1}_{\gamma_{j} > \sqrt{s} \log j}\bigg]  \\
     & \leq 100 |E_{n}|^{2} s + \sum_{j=2}^{\infty} \frac{\sqrt{s}}{\sqrt{2\pi}} e^{-\frac{1}{2}\log^{2} j} \E\bigg[\frac{1}{\gamma_{j}}\bigg] \\
     & \leq 100 |E_{n}|^{2} s + \frac{2}{\sqrt{2\pi}} |E_{n}| \sqrt{s} \sum_{j=2}^{\infty}\frac{1}{j-1} e^{-\frac{1}{2}\log^{2} j} \\ 
     &\leq 100 |E_{n}|^{2}s + |E_{n}|\sqrt{s}.
  \end{split}
\end{equation}

In particular, Equations~\eqref{normalone} and \eqref{normalgamma} imply that, for any $k\geq 1$,
\begin{equation}
\sum_{i=1}^{k} \E\bigg[ \int_{t(k+1) - t(i)}^{\infty} P_{s}(y) dy \bigg] \leq |E_{n}|\sqrt{s}\Big(3 + 2e^{2|E_{n}|^{2}s}+ 100 |E_{n}|\sqrt{s}\Big).
\end{equation}
This implies with Equation~\eqref{fpivotalbound} that
\begin{equation}
  \begin{split}
     \E\bigg[ \sum_{i=1}^{L} \sum_{j=i+1}^{M} \sum_{k=i}^{j-1} & \P\big(f_{n}(\omega_{i\to k}) \neq f_{n}(\omega_{i\to k+1})\big) \int_{t(j) - t(i)}^{t(j+1)-t(i)}P_{s}(y)dy \bigg] \\
     & \leq 3 n \Delta_{n} \sqrt{s} \bigg( 3 + 2e^{2|E_{n}|^{2}s} + 100 |E_{n}| \sqrt{s} \bigg).
  \end{split}
\end{equation}

The proof is completed by combining the bound above with~\eqref{rightmirror} and adjusting the constants.
\end{proof} 

We are finally ready to prove Theorem~\ref{t_brownian_noise}. Borrowing from the intuition given by percolation crossings, sensitivity is detected up to the threshold where pivotal swaps happen with non-vanishing probability. In Section~\ref{stabsec}, the local structure of the noise is simpler and pivotality is easier to define. However, for the Brownian perturbation, this is less straightforward, and long-range swaps need to be considered. This is the reason why we introduce the events in~\eqref{eq:Brownian_move}. Lemma~\ref{russolemma} is the key step in our approach and establishes an upper bound, which requires the introduction of $\theta^{q}$. In fact, we obtain an interpolation between the edge selections $\theta$ and $\theta_{s}$ that, as $q$ varies, randomly displaces one particle at a time. Finally, this allows us to analyze the behavior of each Brownian move separately, by relying on simple quantitative bounds for the Brownian heat kernel.

\begin{proof}[Proof of Theorem~\ref{t_brownian_noise}]
Combining Lemmas~\ref{l_modified_truncation},~\ref{russolemma}, and~\ref{pivotalbound} we get
\begin{equation}\label{eq_t_big_bound}
   \begin{split}
      \P\big( f_{n}(\theta) \neq & f_{n}(\theta_{s}) \big) \leq \frac{2\sqrt{s}|E_{n}|}{(m-\ell)} \Big( \frac{\sqrt{s}}{m-\ell}+\ell \Big) e^{-\frac{(m-\ell)^{2}}{2s}} + 24m |E_{n}|e^{-\uc{c_coal_2}\frac{\ell}{n|E_{n}|}} \\
      + \uc{c_pivotal_bound} \sqrt{s} & \bigg( |E_{n}| (1+ m|E_{n}|)  \frac{m}{m-\ell}e^{-(m-\ell)^{2}/2s} + n \Delta_{n} \big( 1 + e^{2|E_{n}|^{2}s} + |E_{n}| \sqrt{s} \big) \bigg).
   \end{split}
\end{equation}

Since $G_{n}$ is connected we must have $\frac{n}{2} \leq |E_{n}| \leq n^{2}$. Together with the condition $|E_{n}| \sqrt{s} \leq \uc{c_brownian_1}$, we obtain
\begin{equation}
    n \leq \frac{2\uc{c_brownian_1}}{\sqrt{s}} \quad \text{and} \quad s \leq \uc{c_brownian_1}^{2}.
\end{equation}

Choose now $\ell = n^{3}/s$ and $m = \ell(1 + 1/s) = \frac{\ell}{s}(1+s)$. Substituting these values in the right-hand-side of~\eqref{eq_t_big_bound} and using the bound $|E_{n}| \leq n \Delta_{n}$, we obtain
\begin{equation}
   \begin{split}
      \P\big( f_{n}(\theta) \neq f_{n}(\theta_{s}) \big) & \leq 2 s^{3/2} |E_{n}| \bigg( 1+\frac{s^{7/2}}{n^{6}}\bigg) e^{-\frac{n^{6}}{2s^{5}}} + 24\frac{n^{3}}{s^{2}}(1+\uc{c_brownian_1}^{2}) |E_{n}|e^{-\uc{c_coal_2}\frac{1}{s}} \\
      + \uc{c_pivotal_bound} \sqrt{s} & \bigg( |E_{n}| \Big( 1 + 4\frac{n^{5}}{s^{2}} \big( 1+\uc{c_brownian_1}^{2} \big) \Big) \big( 1 + \uc{c_brownian_1}^{2} \big) e^{-n^{6}/2s^{5}} + n \Delta_{n} \big( 1 + e^{2\uc{c_brownian_1}^{2}} + \uc{c_brownian_1} \big) \bigg) \\
      & \leq n \Delta_{n} \sqrt{s} \bigg( 2 \uc{c_brownian_1}^{2} \big( 1 + \uc{c_brownian_1}^{7} \big) + 24 \frac{8 \uc{c_brownian_1}^{3}}{s^{4}} \big( 1+\uc{c_brownian_1}^{2} \big) e^{-\uc{c_coal_2}\frac{1}{s}} \\
      & \qquad + \uc{c_pivotal_bound} \Big( 1+ 4\frac{n^{5}}{s^{2}} \big(1+\uc{c_brownian_1}^{2} \big) \Big) \big( 1 + \uc{c_brownian_1}^{2} \big) e^{-n^{6}/2s^{5}} + 1 + e^{2\uc{c_brownian_1}^{2}} + \uc{c_brownian_1} \bigg) \\
      & \leq \uc{c_brownian_2}n \Delta_{n}\sqrt{s},
   \end{split}
\end{equation}
where the last bound is obtained by noticing that all the terms in parentheses are bounded in the domain $n \geq 1$ and $s \in (0, \uc{c_brownian_1}^{2})$. This concludes the proof of the theorem. 
\end{proof}

\paragraph{Acknowledgements.}

The authors thank Daniel Ahlberg and Hugo Vanneuville for fruitful discussions during the elaboration of this paper.\par 
The research of GA counted on the support of the Israel Science Foundation grant \# 957/20.
OA and DdlR were supported in part by NSERC.
The research of RB was funded by ``Conselho Nacional de Desenvolvimento Científico e Tecnológico – CNPq'' grants ``Produtividade em Pesquisa'' (308018/2022-2) and ``Projeto Universal'' (402952/2023-5).

\printbibliography

\end{document}